\documentclass[draft,reqno]{amsproc}
\usepackage{amsmath}
\usepackage{amsfonts}
\usepackage{mathrsfs}
\usepackage[dvips]{graphicx}
\usepackage{amssymb}
\usepackage{amsbsy}
\usepackage{amsthm}
\usepackage{euscript}   %\EuScript{A}
\usepackage{graphics}
\usepackage{color}
\usepackage{textcomp}
\usepackage{verbatim}

\makeatletter
\@namedef{subjclassname@2010}{%
\textup{2010} Mathematics Subject
Classification} \makeatother

\numberwithin{equation}{section}

\newtheorem{theorem}{Theorem}[section]
\newtheorem*{theorem*}{Theorem}
\newtheorem{corollary}[theorem]{Corollary}
\newtheorem{prop}[theorem]{Proposition}

\newtheorem*{prob*}{Problem}

\theoremstyle{remark}
\newtheorem{rem}[theorem]{Remark}

\theoremstyle{definition}
\newtheorem{exa}[theorem]{Example}

\newcommand*{\ascr}{\mathscr{A}}
\newcommand*{\bscr}{\mathscr{B}}

\newcommand*{\gammab}{\boldsymbol{\gamma}}
\newcommand*{\hh}{\mathcal{H}}

\newcommand*{\kk}{\mathcal{K}}

\newcommand*{\natu}{\mathbb{N}}

\newcommand*{\borel}[1]{\mathfrak{B}(#1)}

\newcommand*{\D}{\mathrm{d\hspace{.1ex}}}

\newcommand*{\Le}{\leqslant}
\newcommand*{\Ge}{\geqslant}
\newcommand*{\ran}{\mathscr{R}}

\newcommand*{\ogr}[1]{\boldsymbol{B}(#1)}
\newcommand*{\rbb}{\mathbb{R}}
\newcommand*{\zbb}{\mathbb{Z}}

\begin{document}

\dedicatory{In memory of Professor Franciszek
Hugon Szafraniec, our teacher and mentor, whose
vision and commitment left a lasting mark on our
mathematical community.}

   \title[ CPD $n$th roots of subnormal operators are subnormal]
{CPD $n$th roots of subnormal operators are subnormal}

    \author[Z.J. Jab{\l}o{\'n}ski, I. B. Jung, P. Pietrzycki and J. Stochel]{ Zenon Jan Jab{\l}o{\'n}ski,  Il Bong Jung, Pawe{\l} Pietrzycki and  Jan Stochel}

   \subjclass[2020]{}

   \keywords{$n$th root, CPD operator, subnormal operator,
quasinormal operator, normal operator, $m$-isometry}
   \address{Wydzia{\l} Matematyki i Informatyki, Uniwersytet
Jagiello\'{n}ski, ul. {\L}ojasiewicza 6,
PL-30348 Krak\'{o}w, Poland}

   \email{zenon.jablonski@im.uj.edu.pl}

   \address{Department of Mathematics, Kyungpook National University, Daegu 702-701, Ko
rea}

   \email{ibjung@knu.ac.kr}
   \address{Wydzia{\l} Matematyki i Informatyki, Uniwersytet
Jagiello\'{n}ski, ul. {\L}ojasiewicza 6,
PL-30348 Krak\'{o}w, Poland}

   \email{pawel.pietrzycki@im.uj.edu.pl}

   \address{Wydzia{\l} Matematyki i Informatyki, Uniwersytet
Jagiello\'{n}ski, ul. {\L}ojasiewicza 6,
PL-30348 Krak\'{o}w, Poland}

   \email{jan.stochel@im.uj.edu.pl}

   \subjclass[2020]{Primary 47B20, 47B15; Secondary 44A60, 43A35}

    \thanks{This work was supported by the National Science Centre, Poland, under
project OPUS grant no. DEC-2024/55/B/ST1/00280.}

   \begin{abstract}
We investigate the $n$th-root problem for bounded
operators on a Hilbert space within the class of
conditionally positive definite (CPD) operators
determined by the L\'evy--Khintchine formula.
This class contains subnormal operators, complete
hypercontractions of order $2$, and
$3$-isometries. Our main result shows that if $T$
is a CPD operator such that $T^n$ is subnormal
(respectively, quasinormal, normal, or a
$3$-isometry), then $T$ belongs to the
corresponding class. This establishes that these
classes are invariant under taking $n$th roots
within the CPD class and extends several earlier
results in operator theory. Furthermore, we
characterize quasinormal and normal operators in
terms of the CPD property and the structure of
the associated representing triplet. Finally,
using both theoretical arguments and explicit
examples, we show that the classes of CPD and
normaloid operators are distinct.
   \end{abstract}

   \maketitle

   \section{Introduction}
The investigation of $n$th roots of operators
within selected classes has a decades-long
history, with origins dating back at least to the
early 1950s; see, for example,
\cite{hal53,hal54,put57,Emb68,Ra-Ros,Keo84,wog85,con87,Dug93,Il-Ku19,Ki-Ko22,M-P-R23,Stan23,Stankovic2025,StankovicKubrusly2025,CurtoPrasad2026}.
J. G. Stampfli proved that a hyponormal $n$th
root of a normal operator is normal (see
\cite[Theorem~5]{sta62}). T.~Ando improved this
result showing that a paranormal $n$th root of a
normal operator is normal (see
\cite[Theorem~6]{Ando72}). However, a hyponormal
$n$th root of a subnormal operator need not be
subnormal (see \cite[pp.\ 378/379]{sta66}). On
the other hand, normal operators and non-normal
quasinormal operators can have non-normal and
non-quasinormal $n$th roots, respectively (see
\cite[Section~6]{P-S23}).

The present work can be regarded as a continuation of the article
\cite{P-S21}, in which the authors solved affirmatively \cite[Problem
1.1]{Curto20}, posed by R. E. Curto, S. H. Lee, and J. Yoon, by
proving a more general result stated below. Here, $\ogr{\hh}$ denotes
the $C^*$-algebra of all bounded (linear) operators on a (complex)
Hilbert space $\hh$.
   \begin{theorem}[{\cite[Theorem~1.2]{P-S21}}] \label{twPS}
If $T\in \ogr{\hh}$ is a subnormal operator such that $T^n$ is
quasinormal, where $n$ is a positive integer, then $T$ is
quasinormal.
   \end{theorem}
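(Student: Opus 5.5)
We need to prove: $T$ subnormal and $T^n$ quasinormal imply $T$ quasinormal. The plan is to use the characterization of quasinormality through the moment sequences $\|T^k h\|^2$ together with uniqueness in the Stieltjes moment problem.

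\textbf{Quasinormality via moments.} An operator $A\in\ogr{\hh}$ is quasinormal if and only if there is a spectral measure $F$ on $[0,\infty)$ such that
\begin{equation*}
A^{*k}A^k=\int_{[0,\infty)} x^k\, F(\D x), \quad k\Ge 0.
\end{equation*}
In that case $F$ is the spectral measure of $|A|^2$, since $A^{*k}A^k=(A^*A)^k$ for quasinormal $A$. Conversely, $A^{*k}A^k=(A^*A)^k$ for all $k$ (even just $k=1,2$, i.e.\ $A^{*2}A^2=(A^*A)^2$) implies quasinormality; this is a known fact (Embry, Jabłoński–Jung–Stochel).

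\textbf{Moments of the subnormal operator.} Since $T$ is subnormal, Lambert's theorem gives, for each $h\in\hh$, a unique compactly supported measure $\mu_h$ on $[0,\|T\|^2]$ with
\begin{equation*}
\|T^kh\|^2=\int x^k\,\D\mu_h(x), \quad k\Ge 0.
\end{equation*}
Better, there is a semispectral measure $G$ with $T^{*k}T^k=\int x^k\, G(\D x)$. This comes from a normal extension $N$ with spectral measure $E$: set $G(\Delta)=PE(\{z\colon |z|^2\in\Delta\})|_{\hh}$.

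\textbf{Comparing with $T^n$.} By hypothesis $T^n$ is quasinormal, so with $F$ the spectral measure of $|T^n|^2$,
\begin{equation*}
T^{*nk}T^{nk}=\int y^k\, F(\D y).
\end{equation*}
Pushing $G$ forward under $x\mapsto x^n$ gives a second semispectral representation of the same sequence $k\mapsto T^{*nk}T^{nk}$. Both measures are compactly supported, so the Hausdorff moment problem is determinate and $G\circ\phi^{-1}=F$ with $\phi(x)=x^n$. Since $\phi$ is a bijection of $[0,\infty)$, $G(\Delta)=F(\phi(\Delta))$. Hence $G$ is a spectral (projection-valued) measure, and in fact $G$ is the spectral measure of $|T^n|^{2/n}$. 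Then
\begin{equation*}
T^*T=\int x\,G(\D x)=|T^n|^{2/n}, \qquad T^{*k}T^k=\int x^k\,G(\D x)=\Bigl(\int x\,G(\D x)\Bigr)^k=(T^*T)^k,
\end{equation*}
where the second chain uses multiplicativity of the spectral integral. So $T^{*2}T^2=(T^*T)^2$, and by the fact recalled in the first step $T$ is quasinormal.

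\textbf{Main obstacle.} The crux is the final implication, $T^{*k}T^k=(T^*T)^k$ for all $k$ $\Rightarrow$ $T$ quasinormal. I would argue it directly. The identity $T^{*2}T^2=(T^*T)^2$ means $\|\,|T|Th\|=\|\,|T|^2h\|$ for all $h$, i.e.\ $\|T^*T\,Th\|^2=\|\cdots\|$. I would deduce $T^*T\,T=T\,T^*T$ by taking an Embry-type approach: use the normal extension $N$, for which the spectral-measure identity forces $\hh$ to reduce $|N|$, whence $|N|$ commutes with $N$ on $\hh$ and $T$ commutes with $|T|$. The fact that $G$ is projection-valued is exactly what makes $P\,E(\cdot)|_{\hh}$ multiplicative, and that is the key verification.
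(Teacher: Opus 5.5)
Your proof is correct, but it takes a more direct route than the paper. Here the statement is quoted from the literature and then recovered from Theorem~\ref{thp5} via Proof~II; Proof~I cannot serve, since it uses the statement itself.

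\textbf{The paper's route.} It passes to the CPD representing triplets of $T$ and $T^n$ and uses $F=M$, $F_n=M_n$. With the power formula \eqref{mnbp}--\eqref{tmnx} from the CPD theory, it derives the transport identity \eqref{rvsw}. It then applies the characterization of quasinormality in Theorem~\ref{thp3} first to $T^n$ and then to $T$. That characterization is projection-valuedness of $\varDelta\mapsto\int_\varDelta (x-1)^{-2}F(\D x)$, together with conditions (b) and (c).

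\textbf{Your route.} You bypass this machinery by working with your measure $G$ on $\rbb_+$, which is the paper's $G\circ\phi^{-1}$. Hausdorff determinacy gives $G\circ\psi_n^{-1}=E_{T^{*n}T^n}$, where $\psi_n$ is your $\phi$. Injectivity of $\psi_n$ on $\rbb_+$ then gives $G(\varDelta)=E_{T^{*n}T^n}(\psi_n(\varDelta))$, so $G$ is spectral, and Embry's criterion finishes.

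\textbf{Comparison.} By \eqref{ddwj}, $\int_\varDelta (x-1)^{-2}F(\D x)=G\circ\phi^{-1}(\varDelta\setminus\{1\})$ for subnormal operators. So the mechanism is the same: push-forward under $x\mapsto x^n$, plus Embry. Your version is shorter and self-contained, needing only the normal extension, determinacy and Embry. It also treats the point $1$ uniformly, whereas the paper has to rebuild the atom at $1$ through \eqref{gpiq} inside Theorem~\ref{thp3}. The paper's route stays inside the CPD framework of the rest of the article and shows Theorem~\ref{thp3} at work.

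\textbf{The claim about $k=1,2$ is false as stated.} For a general operator, the single identity $A^{*2}A^2=(A^*A)^2$ does not imply quasinormality. A counterexample is a bounded weighted shift on a directed tree whose root has two children, with $\|Se_v\|^2$ equal to $1$ and $3$ below them; these average to $\|Se_{\mathrm{root}}\|^2=2$. Embry's criterion uses all $k$. Alternatively, $k=2,3$ together suffice: with $S=A^*A$, the identities $A^*SA=S^2$ and $A^*S^2A=S^3$ give $(SA-AS)^*(SA-AS)=0$. Your last sentence should therefore invoke $T^{*k}T^k=(T^*T)^k$ for all $k$, which you have already derived. For subnormal $T$ the case $k=2$ alone does suffice, because $T^{*2}T^2-(T^*T)^2=PX(I-P)XP|_{\hh}$ with $X=N^*N$.

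\textbf{The ``main obstacle'' paragraph is unnecessary.} That implication is exactly Embry's theorem. Your normal-extension sketch does work once stated cleanly. If $PE_X(\cdot)|_{\hh}$ is projection-valued, then $(I-P)E_X(\varDelta)P=0$, so $\hh$ reduces $X$. Hence $X|_{\hh}=T^*T$, and $T(T^*T)h=NXh=XNh=T^*T\,Th$ for $h\in\hh$.
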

This result was generalized in \cite{P-S23} from subnormal operators
to operators of class $A$. The operator measure techniques used in
the proof of Theorem~\ref{twPS} are closely connected with Arveson's
hyperrigidity conjecture (see \cite[Theorem~4.2]{P-S21} and
\cite[Lemma~8.1]{P-S-24}).

The celebrated characterization of subnormality due to Lambert states
that a bounded operator $T$ on a Hilbert space $\hh$ is subnormal if
and only if, for every $h \in \hh$, the sequence $\{\|T^n
h\|^2\}_{n=0}^{\infty}$ is positive definite as a function on the
additive semigroup of all nonnegative integers $\zbb_+$ (see
\cite{Lam76}; see also \cite[Theorem~7]{Sto-Szaf89}). In the harmonic
analysis on $*$-semigroups developed in \cite{B-C-R}, conditionally
positive definite functions emerge as fundamental objects. Inspired
by this, conditionally positive definite (CPD) operators were
introduced and studied in \cite{J-J-S22}. An operator $T$ is said to
be CPD if, for every $h \in \hh$, the sequence $\{\|T^n
h\|^2\}_{n=0}^{\infty}$ is conditionally positive definite as a
function on the semigroup $\zbb_+$. The class of CPD operators
contains, in particular, subnormal operators \cite{hal50,Con91},
complete hypercontractions of order $2$ \cite{Cha-Sh15},
$3$-isometries \cite{Ag-St1,Ag-St2,Ag-St3}, and many others.

The central aim of the $n$th-root problem is to
determine which classes of operators, say
$\EuScript{A}$, are invariant under taking $n$th
roots within a prescribed class of operators, say
$\EuScript{B}$, where
$\EuScript{A}\subseteq\EuScript{B}$. In this
paper, we focus on the case where $\EuScript{B}$
is the class of CPD operators, while
$\EuScript{A}$ ranges over the classes of
subnormal, quasinormal, normal, and $3$-isometric
operators. Our main result in this direction is
as follows.
   \begin{theorem}  \label{twes}
Let $n \Ge 2$ be an integer, and let $T \in \ogr{\mathcal{H}}$ be a
CPD operator such that $T^n$ is subnormal $($resp., quasinormal,
normal, or a $3$-isometry$)$. Then $T$ is subnormal $($resp.,
quasinormal, normal, or a $3$-isometry$)$.
   \end{theorem}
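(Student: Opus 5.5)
We work with the Lévy--Khintchine representation of CPD operators from \cite{J-J-S22}. For a CPD operator $T$ and each $h\in\hh$ it gives
\begin{align*}
\|T^k h\|^2=\|h\|^2+b(h)\,k+c(h)\,k^2+\int_{[0,\infty)}\bigl(t^k-1-k(t-1)\bigr)\,\frac{\mu_h(\D t)}{(t-1)^2},\quad k\in\zbb_+,
\end{align*}
with $b(h)\in\rbb$, $c(h)\Ge 0$, and a finite positive Borel measure $\mu_h$; at $t=1$ the integrand is read as $k(k-1)/2$. In operator form this is a representing triplet $(B,C,F)$, where $B$ is selfadjoint, $C\Ge 0$, and $F$ is a semispectral measure. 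We would compare this with the sequence $k\mapsto\|T^{nk}h\|^2$. Under the hypothesis on $T^n$, that sequence is a Stieltjes moment sequence in the subnormal case (Lambert \cite{Lam76}) and a polynomial of degree at most $2$ in the $3$-isometric case.

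\textbf{Subnormal case.} Put $\varphi(k)=\|T^k h\|^2$. By Lambert, $T$ is subnormal as soon as $\varphi$ is a Stieltjes moment sequence for every $h$. The key step is to show that this happens exactly when the quadratic term vanishes, $c(h)=0$, and the measure $\nu_h=(t-1)^{-2}\mu_h$ has finite mass near $t=1$, so that $\varphi(k)=\int t^k\,\rho_h(\D t)$ for a positive measure $\rho_h$. We then substitute $k\mapsto nk$ in the formula:
\begin{align*}
\varphi(nk)=\|h\|^2+n\,b(h)\,k+n^2c(h)\,k^2+\int\bigl(s^k-1-k(s-1)\bigr)\,\frac{\tilde\mu_h(\D s)}{(s-1)^2}+(\text{linear in }k).
\end{align*}
Here $\tilde\mu_h$ is obtained from $\mu_h$ by the change of variables $s=t^n$, reweighted by $(t^n-1)^2/(t-1)^2$; this is again a Lévy--Khintchine representation. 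We expect such representations to be unique; if needed, this is justified by a determinacy or moment-growth argument of the kind used in \cite{J-J-S22}. Since $\varphi(nk)$ is a Stieltjes moment sequence, uniqueness forces $n^2c(h)=0$ and forces $\tilde\mu_h$ to come from a measure that is integrable against $(s-1)^{-2}$ and whose linear parts match. Because $(t^n-1)/(t-1)$ is bounded away from $0$ on $[0,\infty)$, these conditions transfer back to $c(h)=0$ and to finiteness of $(t-1)^{-2}\mu_h$. Hence $\varphi$ is Stieltjes and $T$ is subnormal.

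\textbf{$3$-isometric case.} This goes the same way. $\varphi(nk)$ is a quadratic polynomial in $k$, so uniqueness gives $\tilde\mu_h=0$, hence $\mu_h=0$. Then $\varphi$ is quadratic for every $h$, which is exactly the statement that $T$ is a $3$-isometry.

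\textbf{Quasinormal and normal cases.} These build on the subnormal case, which we now know. $T$ is then a subnormal operator with $T^n$ quasinormal, and Theorem~\ref{twPS} gives that $T$ is quasinormal. If $T^n$ is normal, $T$ is subnormal and hence hyponormal, and Stampfli's theorem \cite[Theorem~5]{sta62} gives that $T$ is normal.

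\textbf{Main obstacle.} The hard step is the uniqueness of the Lévy--Khintchine triplet together with the careful change of variables $t\mapsto t^n$. Points to handle are the atom at $t=1$, the point $t=0$, and the sequences that grow polynomially versus those that grow exponentially. Our first attempt would be to work entirely with scalar sequences, using the identity $\varphi(nk)$ and the fact that the map $t\mapsto t^n$ is injective on $[0,\infty)$. We would lift to the operator triplet only at the end, if that turns out to be convenient.
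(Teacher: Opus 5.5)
Your route is the scalar, vector-by-vector form of the paper's own argument. Substituting $k\mapsto nk$ in the L\'evy--Khintchine formula is exactly what \cite[Theorem~3.3.2]{J-J-S22} encodes as $M_n=\tilde M_n\circ\psi_n^{-1}$. Your $3$-isometric case agrees with the paper's, and, granted the subnormal case, so do your quasinormal and normal cases (Theorem~\ref{twPS} and Stampfli). The subnormal case itself, however, has a genuine gap.

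The criterion you state---$\varphi$ is Stieltjes exactly when $c(h)=0$ and $(t-1)^{-2}\mu_h$ is finite---is false. If $T$ is a non-isometric $2$-isometry and $\|Th\|\neq\|h\|$, then $\varphi(k)=\|h\|^2+k\bigl(\|Th\|^2-\|h\|^2\bigr)$ has $c(h)=0$ and $\mu_h=0$, yet it is not a Stieltjes moment sequence. The correct criterion (the scalar content of Theorem~\ref{subn-1}) also requires $\int(t-1)^{-2}\,\mu_h(\D t)\Le\|h\|^2$ and, above all, $b(h)=\int(t-1)^{-1}\,\mu_h(\D t)$. Your transfer step brings back only $c(h)=0$ and integrability, so ``Hence $\varphi$ is Stieltjes'' does not follow. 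The phrase ``whose linear parts match'' names the issue without resolving it, and this is exactly the part of the paper's proof that requires a real computation.

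To close the gap, make the ``linear in $k$'' term explicit. With $s=t^n$ one has $t^{nk}-1-nk(t-1)=\bigl(s^k-1-k(s-1)\bigr)+k\bigl(t^n-1-n(t-1)\bigr)$, that is, $Q_{nk}(t)=\bigl(\frac{t^n-1}{t-1}\bigr)^2Q_k(t^n)+kQ_n(t)$.

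First normalize $\mu_h(\{1\})=0$; otherwise an atom at $1$ can be traded against $c(h)$ and $b(h)$. With this normalization, uniqueness is immediate: $\varphi(k+2)-2\varphi(k+1)+\varphi(k)=2c(h)+\int t^k\,\mu_h(\D t)$ is the moment sequence of the compactly supported measure $2c(h)\delta_1+\mu_h$. The triple of $\varphi(n\,\cdot)$ is then $\bigl(nb(h)+\int Q_n\,\D\mu_h,\ n^2c(h),\ \tilde\mu_h\bigr)$.

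Applying the Stieltjes criterion to $\varphi(n\,\cdot)$ gives three things:
$c(h)=0$;
the exact identity $\int(t-1)^{-2}\,\D\mu_h=\int(s-1)^{-2}\,\D\tilde\mu_h\Le\|h\|^2$, so no bounded-away-from-zero argument is needed;
and $nb(h)+\int Q_n\,\D\mu_h=\int(s-1)^{-1}\,\D\tilde\mu_h=\int\frac{t^n-1}{(t-1)^2}\,\D\mu_h$.

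Since $\frac{t^n-1}{(t-1)^2}-Q_n(t)=\frac{n}{t-1}$, and $1/|t-1|\in L^1(\mu_h)$ by Cauchy--Schwarz, this yields $b(h)=\int\frac{\mu_h(\D t)}{t-1}$. Only then is $\varphi$ a Stieltjes moment sequence. This is precisely the paper's computation via $B_n=T^{*n}T^n-I$ and \eqref{rnx-1}.
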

The proof of Theorem~\ref{twes} is divided into four parts,
corresponding to the four cases in the theorem, and is carried out in
Section~\ref{Sec.4} (Theorem~\ref{thp1} and Corollary~\ref{thpc}) and
Section~\ref{Sec.5} (Theorems~\ref{thp5} and~\ref{thp52}). In
Section~\ref{Sec.3}, we provide characterizations of quasinormal and
normal operators in terms of their CPD property and the structure of
the representing triplet (see Theorem~\ref{thp3} and
Proposition~\ref{thp3b}). In Section~\ref{Sec.6}, we show, both by a
purely theoretical argument and by concrete examples, that the
classes of CPD operators and normaloid operators are distinct, in the
sense that neither is contained in the other.
   \section{Preliminaries}
In this paper, we use the following notation. The fields of real and
complex numbers are denoted by $\mathbb{R}$ and $\mathbb{C}$,
respectively. The symbols $\mathbb{Z}_+$, $\mathbb{N}$, and
$\mathbb{R}_+$ stand for the sets of nonnegative integers, positive
integers, and nonnegative real numbers, respectively. Given a set
$\varDelta \subseteq \mathbb{C}$, we write $\varDelta^* = \{\bar{z}
\colon z \in \varDelta\}$. We denote by $\borel{\varOmega}$ the
$\sigma$-algebra of all Borel subsets of a Hausdorff topological
space $\varOmega$.

A sequence $\gammab=\{\gamma_n\}_{n=0}^{\infty} \subseteq \mathbb{R}$
is said to be \emph{positive definite} (\emph{PD} for brevity) if
   \begin{align} \label{virek}
\sum_{i,j=0}^k \gamma_{i+j} \lambda_i \bar{\lambda}_j
\Ge 0
   \end{align}
for all finite sequences $\{\lambda_i\}_{i=0}^{k} \subseteq
\mathbb{C}$. The sequence $\gammab$ is said to be \emph{conditionally
positive definite} (\emph{CPD}) if \eqref{virek} holds for all finite
sequences $\{\lambda_i\}_{i=0}^{k} \subseteq \mathbb{C}$ such that
$\sum_{i=0}^k \lambda_i = 0$. Trivially, every PD sequence is CPD.

A sequence $\gammab=\{\gamma_n\}_{n=0}^{\infty}
\subseteq \mathbb{R}$ is said to be a {\em Stieltjes
moment sequence} if there exists a positive Borel
measure $\mu$ on $\mathbb{R}_+$ such that
   \begin{align} \label{hamb}
\gamma_n = \int_{\mathbb{R}_+} t^n \, d\mu(t), \quad n
\in \mathbb{Z}_+.
   \end{align}
A positive Borel measure $\mu$ on $\mathbb{R}_+$ satisfying
\eqref{hamb} is called a {\em representing measure} of $\gammab$. The
Stieltjes theorem states that $\gammab$ is a Stieltjes moment
sequence if and only if the sequences $\gammab$ and
$\{\gamma_{n+1}\}_{n=0}^{\infty}$ are PD (see
\cite[Theorem~6.2.5]{B-C-R}). If $\gammab$ is a Stieltjes moment
sequence which has a unique representing measure, then we say that
$\gammab$ is {\em determinate}. It is well known that if a Stieltjes
moment sequence has a representing measure with compact support, then
it is determinate. The reader is referred to \cite{B-C-R} for the
foundations of the theory of moment problems.

The $C^*$-algebra of all bounded linear operators on a (complex)
Hilbert space $\hh$ is denoted by $\ogr{\hh}$. The identity operator
on $\hh$ is denoted by $I$. Let $\ascr$ be a $\sigma$-algebra of
subsets of a set $\varOmega$, and let $F\colon \ascr \to \ogr{\hh}$
be a {\em semispectral measure}, that is, $\langle F(\cdot)f,
f\rangle$ is a positive measure for every $f \in \hh$. Denote by
$L^1(F)$ the vector space of all $\ascr$-measurable functions
$f\colon \varOmega \to \mathbb{C}$ such~that
\[
\int_{\varOmega} |f(x)| \, \langle F(\D x)h, h\rangle < \infty \quad
\text{for all } h \in \hh.
\]
Then, for every $f \in L^1(F)$, there exists a unique operator
$\int_\varOmega f \, \D F \in \ogr{\hh}$ such that (see, e.g.,\
\cite[Appendix]{Sto92})
\begin{align*}
\Big\langle \int_\varOmega f \, \D F\, h, h \Big\rangle =
\int_\varOmega f(x) \, \langle F(\D x)h, h\rangle, \quad h \in \hh.
\end{align*}
In this paper, we do not assume that a semispectral measure $F$ on
$\ascr$ is normalized, that is, that $F(\varOmega) = I$. Observe that
if $F$ is a {\em spectral measure}, that is, a normalized
semispectral measure such that $F(\varDelta)$ is an orthogonal
projection for every $\varDelta \in \ascr$, then $\int_\varOmega f \,
\D F$ coincides with the usual spectral integral. If $F$ is the
spectral measure of a normal operator $T$, then we write
$f(T)=\int_{\mathbb{C}} f \, \D F$ for a Borel function $f\colon
\mathbb{C} \to \mathbb{C}$; the map $f \mapsto f(T)$ is called the
Stone--von Neumann functional calculus. We refer the reader to
\cite{Weid80,Bir-Sol87,Sch12} for the basic facts of spectral theory.

An operator $T \in \ogr{\hh}$ is said to be {\em subnormal} if there
exists a Hilbert space $\kk$ containing $\hh$ and a normal operator
$N \in \ogr{\kk}$ that extends $T$. By a {\em semispectral measure}
of a subnormal operator $T \in \ogr{\hh}$ we mean a normalized,
compactly supported semispectral measure $G \colon \borel{\mathbb{C}}
\to \ogr{\hh}$ defined by $G(\varDelta) = P E(\varDelta)|_{\hh}$ for
$\varDelta \in \borel{\mathbb{C}}$, where $E \colon
\borel{\mathbb{C}} \to \ogr{\kk}$ is the spectral measure of a
minimal normal extension $N \in \ogr{\kk}$ of $T$, and $P \in
\ogr{\kk}$ is the orthogonal projection of $\kk$ onto $\hh$
(minimality means that $\kk$ has no proper closed subspace that
reduces $N$ and contains $\hh$). It is easily seen that
\begin{align*}
T^{*m} T^n = \int_{\mathbb{C}} \bar z^m z^n \, G(\D z), \quad m,n \in
\mathbb{Z}_+.
\end{align*}
Hence, applying the measure transport theorem (cf.\
\cite[Theorem~1.6.12]{Ash00}), we obtain
\begin{align}  \label{tobemom}
T^{*n} T^n = \int_{\mathbb{R}_+} x^n \, G \circ \phi^{-1}(\D x),
\quad n \in \mathbb{Z}_+,
\end{align}
where $\phi \colon \mathbb{C} \to \mathbb{R}_+$ is defined by
$\phi(z)=|z|^2$ for $z \in \mathbb{C}$, and $G \circ \phi^{-1} \colon
\borel{\mathbb{R}_+} \to \ogr{\hh}$ is the normalized semispectral
measure defined by $G \circ \phi^{-1}(\varDelta) =
G(\phi^{-1}(\varDelta))$ for $\varDelta \in \borel{\mathbb{R}_+}$. By
Lambert's theorem, an operator $T \in \ogr{\hh}$ is subnormal if and
only if $\{\|T^n f\|^2\}_{n=0}^{\infty}$ is PD for all $f \in \hh$
(see \cite{Lam76}). The foundations of the theory of subnormal
operators can be found in \cite{Con91}.

Let $m\in\mathbb{N}$. An operator
$T\in\boldsymbol{B}(\mathcal{H})$ is said to be
an \emph{$m$-isometry} if the equality
$\bscr_m(T)=0$ holds, where
   \begin{align*}
\bscr_m(T) :=
\sum_{j=0}^{m}(-1)^{m-j}\binom{m}{j}T^{*j}T^j.
   \end{align*}
A systematic study of $m$-isometries was
initiated by Agler and Stankus in their trilogy
\cite{Ag-St1,Ag-St2,Ag-St3}.

Following \cite{J-J-S22}, we say that an operator
$T \in \ogr{\hh})$ is \emph{conditionally
positive definite} (CPD) if, for every $h \in
\mathcal{H}$, the sequence $\{\|T^n
h\|^2\}_{n=0}^{\infty}$ is CPD. By Lambert's
theorem, subnormal operators are CPD. The
following theorem characterizes CPD operators in
terms of triplets consisting of two operators and
a semispectral measure. We call the identity
\eqref{cdr5} the L\'evy--Khintchine formula for a
CPD operator~$T$.
   \begin{theorem}[{\cite[Theorem~3.1.1]{J-J-S22}}] \label{cpdops} Let
$T\in \ogr{\hh}$. Then the following statements are equivalent{\em :}
   \begin{enumerate}
   \item[\rm(i)] $T$ is CPD,
   \item[\rm(ii)] there exist operators $B,C\in
\ogr{\hh}$ and a compactly supported semispectral measure $F\colon
\borel{\rbb_+} \to \ogr{\hh}$ such that $B=B^*$, $C\Ge 0$,
$F(\{1\})=0$ and
   \begin{align} \label{cdr5}
T^{*n}T^n = I + n B + n^2 C + \int_{\rbb_+} Q_n \, \D F, \quad n\in
\zbb_+,
   \end{align}
where $Q_n$ is a polynomial in $x$ defined by
   \begin{align} \notag%\label{klaud}
Q_n(x) =
   \begin{cases}
0 & \text{if } x \in \rbb \text{ and } n=0,1,
   \\[1ex]
\sum_{j=0}^{n-2} (n -j -1) x^j & \text{if } x \in \rbb \text{ and }
n\Ge 2.
   \end{cases}
   \end{align}
   \end{enumerate}
Moreover, if {\em (ii)} holds, then the triplet $(B,C,F)$ is unique.
%and
%   \begin{gather} \label{fontan}
%\supp{F} \subseteq [0,r(T)^2],
%   \\ \label{fontan5}
%C\neq 0 \implies r(T) \Ge 1,
%   \\ \label{fontan6}
%\sup \supp{F} \Ge 1 \implies r(T)^2 =
%\sup \supp{F}.
%   \end{gather}
%Furthermore, $(\is{Bh}h, \is{Ch}h,
%\is{F(\cdot)h}h)$ is the representing
%triplet of the CPD sequence $\{\|T^n
%h\|^2\}_{n=0}^{\infty}$ for every $h\in
%\hh$.
   \end{theorem}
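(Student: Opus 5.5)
The equivalence will be reduced to the scalar Lévy--Khintchine theorem for CPD sequences on $\zbb_+$. The operator triplet $(B,C,F)$ is then assembled from the scalar triplets by polarization, with uniqueness coming from the scalar case.

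\emph{(ii)$\Rightarrow$(i).} Fix $h\in\hh$ and set $\gamma_n=\|T^nh\|^2$. By \eqref{cdr5},
\[
\gamma_n=\|h\|^2+n\langle Bh,h\rangle+n^2\langle Ch,h\rangle+\int Q_n\,\langle F(\D x)h,h\rangle .
\]
I would check the conditional positive definiteness term by term, for finite $\{\lambda_i\}$ with $\sum\lambda_i=0$:
\begin{itemize}
\item the constant term and the term $n\langle Bh,h\rangle$ contribute $0$, because $\sum_{i,j}(i+j)\lambda_i\bar\lambda_j=0$;
\item the term $n^2\langle Ch,h\rangle$ contributes $2\langle Ch,h\rangle\,|\sum_i i\lambda_i|^2\Ge0$;
\item for the integral term, I would use the identity $(x-1)^2Q_n(x)=x^n-1-n(x-1)$ for $x\neq1$, together with $Q_n(1)=n(n-1)/2$.
\end{itemize}
For $x\ne1$ the identity turns the kernel $Q_{i+j}(x)$, modulo terms killed by $\sum\lambda_i=0$, into $(x-1)^{-2}x^{i+j}$, which is PD. At $x=1$ it is the $n^2$ case. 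Integrating in $x$ against the positive measure $\langle F(\cdot)h,h\rangle$ gives nonnegativity.

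\emph{(i)$\Rightarrow$(ii).} First I would establish, or quote from \cite{B-C-R} in the $\zbb_+$ case, the scalar statement. Every CPD sequence $\gamma$ with $\gamma_n=O(r^{n})$ has the form
\[
\gamma_n=\gamma_0+nb+n^2c+\int Q_n\,\D\nu ,
\]
with $b\in\rbb$, $c\Ge0$, $\nu$ a positive measure, $\nu(\{1\})=0$ and $\operatorname{supp}\nu\subseteq[0,r]$, and the triplet $(b,c,\nu)$ is unique. To prove it, I would consider the kernel $K(i,j)=\gamma_{i+j}-\gamma_i-\gamma_j+\gamma_0$; eliminating $\lambda_0=-\sum_{i\Ge1}\lambda_i$ shows that it is PD on $\mathbb N$. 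Equivalently, the second differences $\delta_n=\gamma_{n+2}-2\gamma_{n+1}+\gamma_n$ form a PD sequence. The Stieltjes/Hamburger theory then gives a representing measure $\sigma$ for $\delta$, compactly supported in $[0,r]$ by the exponential growth bound. I would split off its atom at $1$: that atom yields $2c$, and the rest yields $\nu$. Summing back twice produces the formula, with $b=\gamma_1-\gamma_0-c$. Uniqueness follows from the determinacy of compactly supported moment problems.

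I would apply the scalar theorem to $\gamma^h_n=\|T^nh\|^2$ with $r=\|T\|^2$, obtaining triplets $(b_h,c_h,\nu_h)$.
\begin{itemize}
\item \emph{Quadratic dependence on $h$.} The sequences satisfy the parallelogram law $\gamma^{h+g}+\gamma^{h-g}=2\gamma^h+2\gamma^g$ and scale as $\gamma^{\alpha h}=|\alpha|^2\gamma^h$. Uniqueness of the scalar triplet transfers both relations to $b_h$, to $c_h$, and to $\nu_h(\varDelta)$ for each Borel set $\varDelta$, so each of these is a quadratic form in $h$.
\item \emph{Boundedness.} From $\delta_0=2c_h+\nu_h(\rbb_+)=\gamma_2-2\gamma_1+\gamma_0\Le(1+\|T\|^2)^2\|h\|^2$ I get bounds on $c_h$ and on $\nu_h(\varDelta)$. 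From $b_h=\gamma_1-\gamma_0-c_h$ I get a bound on $b_h$.
\item \emph{The triplet.} Bounded quadratic forms define $B=B^*$ and $C\Ge0$, and the forms $\nu_h(\varDelta)$ define $F(\varDelta)\Ge0$. Countable additivity of $\langle F(\cdot)h,h\rangle$ is inherited from $\nu_h$, so $F$ is a semispectral measure. It has $F(\{1\})=0$ and is supported in $[0,\|T\|^2]$.
\end{itemize}
Then \eqref{cdr5} holds in the weak sense for every $h$, and hence as an operator identity by polarization. Uniqueness of $(B,C,F)$ follows from scalar uniqueness applied to each $h$.

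The main obstacle is the scalar step, specifically correctly isolating the atom of $\sigma$ at $1$ as the $n^2$ term. One must also justify that the moment measure of $\delta$ lives on $[0,r]$ rather than on $\rbb$. For this I would use that $\{\delta_{n+1}\}$ is also PD, which holds because $T$ shifted by one power is still CPD, and then apply the Stieltjes theorem.
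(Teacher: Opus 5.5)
The paper does not prove this statement; it quotes it from \cite[Theorem~3.1.1]{J-J-S22}. Your argument is correct. It is also consistent with what the paper records in Theorem~\ref{dyl-an}. Your second differences are $\delta^h_n=\langle T^{*n}\bscr_2(T)T^nh,h\rangle$, so your $\sigma$ is $\langle M(\cdot)h,h\rangle$. Your recipe ($2c_h=\sigma(\{1\})$, $\nu_h=\sigma$ restricted to $\rbb_+\setminus\{1\}$, $b_h=\gamma_1-\gamma_0-c_h$) is exactly \eqref{f2m-remi}--\eqref{f2m-semi}.

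One correction to the write-up concerns your scalar lemma, which is false as stated. Exponential growth alone only confines the representing measure of the second differences to $[-r,r]$. For instance, $\gamma_n=Q_n(-\tfrac12)$ is CPD and grows linearly (so it is $O(2^n)$). Its second differences are $(-\tfrac12)^n$, so by determinacy its measure must be the point mass at $-\tfrac12$, not a measure on $[0,2]$.

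Support in $[0,r]$ needs the extra hypothesis that $\{\gamma_{n+1}\}_{n=0}^\infty$ is also CPD. In your setting this holds because $\gamma^h_{n+1}=\gamma^{Th}_n$, i.e.\ the CPD property of $T$ applied to the vector $Th$. That is what your final paragraph supplies, so this hypothesis should be built into the statement of the lemma rather than added as an afterthought.
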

The unique triple \((B,C,F)\) in Theorem~\ref{cpdops} is called the
\emph{representing triple} of~\(T\). The polynomials $Q_n$ can also
be described as follows:
   \begin{align} \label{rnx-1}
Q_n(x) & = \frac{x^n-1 - n (x-1)}{(x-1)^2}, \quad x\in \rbb\setminus
\{1\}, \, n \in \zbb_+.
   \end{align}

There is also another way to describe CPD operators using a single
object, namely a semispectral measure, denoted by $M$.
   \begin{theorem}[{\cite[Theorems~3.2.5 and 3.3.1]{J-J-S22}}]
\label{dyl-an} For $T \in \ogr{\hh}$, the following conditions are
equivalent:
      \begin{enumerate}
   \item[(i)] $T$ is CPD,
   \item[(ii)] there exists a semispectral measure
$M \colon \borel{\mathbb{R}_+} \to \ogr{\hh}$ with compact support
such that
   \begin{align} \notag
T^{*n}\bscr_2(T)T^n = \int_{\mathbb{R}_+} x^n \,
M(\D x), \quad n \in \mathbb{Z}_+.
   \end{align}
   \end{enumerate}
Moreover, if {\em (ii)} holds, then the semispectral measure $M$ is
unique, and if $(B, C, F)$ is a representing triplet of $T$, then
   \begin{gather} \label{f2m-remi}
B + C = T^*T - I, \qquad C = \frac{1}{2} M(\{1\}),
   \\ \label{f2m-semi}
F(\varDelta) = \bigl(1 - \chi_{\varDelta}(1)\bigr) M(\varDelta),
\quad \varDelta \in \borel{\mathbb{R}_+}.
   \end{gather}
   \end{theorem}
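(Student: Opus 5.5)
The plan is to derive Theorem~\ref{dyl-an} from Theorem~\ref{cpdops} by taking second differences of the L\'evy--Khintchine formula \eqref{cdr5}. The basic observation is the identity
\begin{align*}
T^{*n}\bscr_2(T)T^n = T^{*(n+2)}T^{n+2} - 2\,T^{*(n+1)}T^{n+1} + T^{*n}T^n, \quad n\in\zbb_+.
\end{align*}
So $T^{*n}\bscr_2(T)T^n$ is the second forward difference $\Delta^2$ of the operator sequence $\{T^{*n}T^n\}_{n=0}^\infty$.

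Next I compute $\Delta^2$ of each term in \eqref{cdr5}.
\begin{itemize}
\item The sequences $\{I\}$ and $\{nB\}$ have zero second difference.
\item The sequence $\{n^2C\}$ has second difference $2C$.
\item For $Q_n$, I claim $Q_{n+2}(x)-2Q_{n+1}(x)+Q_n(x)=x^n$ for all $x\in\rbb$. For $x\neq 1$ this follows from \eqref{rnx-1}: the numerator becomes $x^n(x-1)^2$, while the constant and linear parts cancel. For $x=1$ it follows by continuity, since both sides are polynomials.
\end{itemize}

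For (i)$\Rightarrow$(ii), take the representing triplet $(B,C,F)$ from Theorem~\ref{cpdops}. Since $F$ has compact support, $\int_{\rbb_+} Q_n\,\D F$ is well defined for the polynomials $Q_n$, and it is linear in the integrand. This gives
\begin{align*}
T^{*n}\bscr_2(T)T^n = 2C + \int_{\rbb_+} x^n \,F(\D x) = \int_{\rbb_+} x^n\, M(\D x),
\end{align*}
where $M := F + 2C\,\delta_1$ and $\delta_1$ is the Dirac measure at $1$. Because $C\Ge 0$ and $F$ is semispectral with compact support, $M$ is a compactly supported semispectral measure. Moreover, $F(\{1\})=0$ yields $M(\{1\})=2C$ and $F(\varDelta)=(1-\chi_\varDelta(1))M(\varDelta)$. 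The case $n=1$ of \eqref{cdr5} gives $T^*T=I+B+C$. These are exactly \eqref{f2m-remi} and \eqref{f2m-semi}.

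For (ii)$\Rightarrow$(i), reverse the construction. Given $M$, define
\begin{align*}
C:=\tfrac12 M(\{1\})\Ge 0,\qquad F:=M(\cdot\setminus\{1\}),\qquad B:=T^*T-I-C.
\end{align*}
Then $B=B^*$, $F$ is compactly supported, and $F(\{1\})=0$. Set $S_n := T^{*n}T^n - \bigl(I+nB+n^2C+\int Q_n\,\D F\bigr)$. By the computation above, both sequences inside $S_n$ have second difference $\int x^n\,\D M$, so $\Delta^2 S_n=0$ for all $n$. In addition, $S_0=0$, and $S_1=T^*T-I-B-C=0$. A sequence with vanishing second differences and $S_0=S_1=0$ is identically zero. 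Hence \eqref{cdr5} holds, and Theorem~\ref{cpdops} shows that $T$ is CPD.

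Uniqueness of $M$ follows because, for each $h$, the numbers $\langle T^{*n}\bscr_2(T)T^n h,h\rangle$ form a Stieltjes moment sequence with a compactly supported representing measure. Such a sequence is determinate, so $\langle M(\cdot)h,h\rangle$ is determined for every $h$, and polarization then determines $M$. The relations \eqref{f2m-remi} and \eqref{f2m-semi} for an arbitrary representing triplet follow from the uniqueness of the triplet in Theorem~\ref{cpdops} combined with the first part. I expect no serious obstacle. The only delicate point is the bookkeeping of the atom at $1$, which is handled by the condition $F(\{1\})=0$.
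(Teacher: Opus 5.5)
The paper does not prove this theorem; it imports it from \cite{J-J-S22}. Your argument is correct, and it takes a genuinely different route: it derives the result in a self-contained way from Theorem~\ref{cpdops}. It has three steps.
\begin{enumerate}
\item The second-difference identity for $T^{*n}\bscr_2(T)T^n$, together with $Q_{n+2}-2Q_{n+1}+Q_n=x^n$ for all $n\in\zbb_+$ (note that \eqref{rnx-1} gives $Q_0=Q_1=0$), turns \eqref{cdr5} into the moment formula with $M=F+2C\delta_1$.
\item Conversely, the recursion $S_{n+2}=2S_{n+1}-S_n$ with $S_0=S_1=0$ rebuilds \eqref{cdr5} from $M$.
\item Uniqueness of $M$ follows from determinacy of compactly supported moment problems plus polarization. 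The relations for an arbitrary triplet then follow from uniqueness of the triplet.
\end{enumerate}
What your route buys is that all the analytic content is inherited from Theorem~\ref{cpdops}: existence of a compactly supported semispectral measure and uniqueness of the triplet. The passage between $(B,C,F)$ and $M$ then becomes transparent bookkeeping, and it shows directly why $C$ is half the atom of $M$ at $1$. The price is that your proof is not independent of the triplet theorem. A proof from scratch would presumably start instead from the observation that, writing $\lambda_i=\mu_i-\mu_{i-1}$,
\[
\sum_{i,j}\gamma_{i+j}\lambda_i\bar\lambda_j=\sum_{i,j}(\gamma_{i+j+2}-2\gamma_{i+j+1}+\gamma_{i+j})\mu_i\bar\mu_j .
\]
Hence $T$ is CPD if and only if $\{\langle T^{*n}\bscr_2(T)T^nh,h\rangle\}_{n=0}^\infty$ is PD for every $h$. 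Applying this to both $h$ and $Th$, and using the Stieltjes theorem with the bound $\|T\|^{2n}$, gives compactly supported scalar measures. These then have to be assembled into a semispectral measure, which is the genuinely analytic step your approach avoids.

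One correction is needed. The relation \eqref{f2m-semi} as printed cannot hold: for $\varDelta=\rbb_+$ it would give $F(\rbb_+)=0$, and hence $F=0$. With $M=F+2C\delta_1$ and $F(\{1\})=0$, what you actually obtain is
\[
F(\varDelta)=M(\varDelta\setminus\{1\})=\int_\varDelta\bigl(1-\chi_{\{1\}}(x)\bigr)\,M(\D x).
\]
This is the intended relation. It is what the proof of Theorem~\ref{thp1} uses to get $F=M$ from $M(\{1\})=0$, and it is exactly how you yourself define $F$ in (ii)$\Rightarrow$(i). So in your (i)$\Rightarrow$(ii) paragraph, state this formula rather than the printed one. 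As written, your claim fails whenever $1\in\varDelta$ and $F(\varDelta)\neq 0$.
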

The following theorem provides a characterization of subnormal
operators in terms of the CPD property and the structure of the
representing triplet, and will serve as a useful reference point in
what follows.
   \begin{theorem}[{\cite[Theorem~3.4.1]{J-J-S22}}] \label{subn-1}
Let $T \in \ogr{\hh}$. Then the following statements are equivalent:
   \begin{enumerate}
   \item[(i)] $T$ is subnormal,
   \item[(ii)] $T$ is CPD and its representing triplet $(B, C, F)$
satisfies the following conditions:
   \begin{enumerate}
   \item[(a)] $\frac{1}{(x-1)^2} \in L^1(F)$ and
$\int_{\mathbb{R}_+} \frac{1}{(x-1)^2} \, F(\D x) \Le
I$,
   \item[(b)] $\frac{1}{x-1} \in L^1(F)$ and
$B = \int_{\mathbb{R}_+} \frac{1}{x-1} \, F(\D x)$,
   \item[(c)] $C = 0$.
   \end{enumerate}
   \end{enumerate}
Moreover, if {\em (ii)} holds and $G$ is the semispectral measure of
$T$ $($see \eqref{tobemom}$)$, then \allowdisplaybreaks
   \begin{align} \label{feme}
F & = M, \text{ where $M$ is as in Theorem~{\em \ref{dyl-an}(ii)}},
   \\ \notag
G \circ \phi^{-1}(\varDelta) &= \int_{\varDelta} \frac{1}{(x-1)^2} \,
F(\D x)
   \\ \label{ddwj}
& \hspace{5ex} + \delta_1(\varDelta) \Bigl(I - \int_{\mathbb{R}_+}
\frac{1}{(x-1)^2} \, F(\D x)\Bigr), \quad \varDelta \in
\borel{\mathbb{R}_+}.
   \end{align}
   \end{theorem}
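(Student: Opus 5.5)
The plan is to prove both implications through the single identity
\begin{align*}
x^n = 1 + n(x-1) + (x-1)^2 Q_n(x), \quad x\in\rbb,\ n\in\zbb_+,
\end{align*}
which is just \eqref{rnx-1} rewritten, and holds trivially at $x=1$. Combined with Lambert's theorem and the uniqueness of the representing triplet in Theorem~\ref{cpdops}, this identity does essentially all the work.

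\emph{(i)$\Rightarrow$(ii).} Let $T$ be subnormal. By Lambert's theorem $T$ is CPD. Set $\nu := G\circ\phi^{-1}$; it is a normalized, compactly supported semispectral measure on $\borel{\rbb_+}$ satisfying \eqref{tobemom}. Integrate the identity above against $\nu$ to get
\begin{align*}
T^{*n}T^n = I + n\int_{\rbb_+}(x-1)\,\nu(\D x) + \int_{\rbb_+} Q_n(x)(x-1)^2\,\nu(\D x).
\end{align*}
Now define $F(\D x) := (x-1)^2\nu(\D x)$, $B:=\int_{\rbb_+}(x-1)\,\nu(\D x)$ and $C:=0$. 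Then $F$ is compactly supported, $F(\{1\})=0$ and $B=B^*$, so \eqref{cdr5} holds. By the uniqueness part of Theorem~\ref{cpdops}, $(B,C,F)$ is the representing triplet; in particular (c) holds. For (a) and (b), compute
\begin{align*}
\int_{\rbb_+} \frac{1}{(x-1)^2}\,F(\D x) = \nu(\rbb_+\setminus\{1\}) \Le \nu(\rbb_+) = I,
\qquad
\int_{\rbb_+}\frac{1}{x-1}\,F(\D x) = \int_{\rbb_+}(x-1)\,\nu(\D x) = B,
\end{align*}
where the second integrand is understood on $\rbb_+\setminus\{1\}$; this causes no problem because both $F$ and the integrand $(x-1)$ against $\nu$ ignore the point $1$. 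Next, $\nu = (x-1)^{-2}F + \delta_1\,\nu(\{1\})$ and $\nu(\{1\}) = I-\nu(\rbb_+\setminus\{1\})$, which is exactly \eqref{ddwj}. Finally, expanding $\bscr_2(T)$ gives $T^{*n}\bscr_2(T)T^n = \int_{\rbb_+} x^n(x-1)^2\,\nu(\D x) = \int_{\rbb_+} x^n\,F(\D x)$, so the uniqueness in Theorem~\ref{dyl-an} yields $M=F$, which is \eqref{feme}.

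\emph{(ii)$\Rightarrow$(i).} Define $\nu$ by the right-hand side of \eqref{ddwj}. Condition (a) guarantees that $\nu$ is a positive operator-valued semispectral measure, and it is compactly supported because $F$ is. Using (a), (b), (c) and the identity,
\begin{align*}
\int_{\rbb_+} x^n\,\nu(\D x) &= I + \int_{\rbb_+}\frac{x^n-1}{(x-1)^2}\,F(\D x)
\\
&= I + n\int_{\rbb_+}\frac{1}{x-1}\,F(\D x) + \int_{\rbb_+} Q_n\,\D F = I + nB + \int_{\rbb_+} Q_n\,\D F = T^{*n}T^n .
\end{align*}
Hence, for each $h\in\hh$, $\{\|T^nh\|^2\}_{n=0}^\infty$ is a Stieltjes moment sequence with representing measure $\langle\nu(\cdot)h,h\rangle$. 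In particular it is PD, and Lambert's theorem gives subnormality of $T$.

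The step needing the most care is the integrability bookkeeping in the computation just displayed. Splitting $x^n/(x-1)^2$ into $1/(x-1)^2 + n/(x-1) + Q_n$ is only legitimate because each piece lies separately in $L^1(F)$: the first two by (a) and (b), and $Q_n$ because it is a polynomial and $F$ has compact support. For the ``moreover'' part under (ii), we must identify $\nu$ with $G\circ\phi^{-1}$. Both measures are compactly supported and have the same moments $\langle T^{*n}T^nh,h\rangle$, so determinacy of compactly supported Stieltjes moment sequences gives $\langle\nu(\cdot)h,h\rangle = \langle G\circ\phi^{-1}(\cdot)h,h\rangle$ for every $h$. Polarization then gives equality of the operator measures. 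With this identification, the formulas from the first half, namely \eqref{feme} and \eqref{ddwj}, apply.
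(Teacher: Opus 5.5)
Your proof is correct. The paper does not prove this statement itself; it quotes it from \cite[Theorem~3.4.1]{J-J-S22}, so there is no in-paper argument to compare yours against.

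Your operator-level argument is a complete, self-contained derivation. It rests on three ingredients:
\begin{itemize}
\item the identity $x^n = 1 + n(x-1) + (x-1)^2 Q_n(x)$;
\item the uniqueness statements in Theorems~\ref{cpdops} and~\ref{dyl-an};
\item Lambert's theorem.
\end{itemize}
Your determinacy-plus-polarization step is the same device the paper uses in the proof of Theorem~\ref{thp3}, (i)$\Rightarrow$(ii), to identify $G\circ\phi^{-1}$ with a given measure.

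Two small simplifications are available.
\begin{itemize}
\item In the ``moreover'' part you do not need determinacy at all. Once (ii)$\Rightarrow$(i) shows that $T$ is subnormal, your first half exhibits $\bigl(\int_{\rbb_+}(x-1)\,G\circ\phi^{-1}(\D x),\,0,\,(x-1)^2\,G\circ\phi^{-1}(\D x)\bigr)$ as a representing triplet of $T$. Uniqueness in Theorem~\ref{cpdops} then forces it to coincide with the given $(B,C,F)$, so \eqref{feme} and \eqref{ddwj} follow immediately.
\item In (ii)$\Rightarrow$(i), the fact that $x^n/(x-1)^2\in L^1(F)$ follows directly from (a), because $x^n$ is bounded on the compact support of $F$. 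The three-term splitting is then needed only to identify the resulting integral with $I+nB+\int_{\rbb_+} Q_n\,\D F$.
\end{itemize}
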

   \section{\label{Sec.3}Quasinormal operators as CPD operators}
The following theorem provides
equivalent characterizations of
quasinormal operators in terms of their
CPD property and the structure of the
representing triplet, emphasizing that,
since quasinormal operators are
subnormal (see
\cite[Proposition~II.1.7]{Con91}), they
naturally share certain conditions that
characterize both classes (see
Theorem~\ref{subn-1}). Recall that an
operator $T\in \ogr{\hh}$ is called
{\em quasinormal} if it commutes with
$T^*T$ (see \cite{brow53}).
   \begin{theorem}\label{thp3}
Let $T \in \ogr{\hh}$. Then the
following statements are equivalent{\em
:}
\begin{itemize}
    \item[(i)] $T$ is quasinormal,
    \item[(ii)] $T$ is CPD and its representing triplet
$(B, C, F)$ satisfies the following{\em
:}
   \begin{enumerate}
   \item[(a)] $\frac{1}{(x-1)^2} \in L^1(F)$
and the map $\borel{\mathbb{R}_+} \ni
\varDelta \to \int_{\varDelta}
\frac{1}{(x-1)^2} F(\D x) \in
\ogr{\hh}$ is an orthogonal
projection-valued semispectral measure,
   \item[(b)] $\frac{1}{x-1} \in L^1(F)$ and
$B = \int_{\mathbb{R}_+} \frac{1}{x-1}
F(\D x)$,
   \item[(c)] $C=0$.
   \end{enumerate}
   \item[(iii)] $T$ is CPD and its representing triplet
$(B, C, F)$ satisfies {\em (b)}, {\em
(c)}, and the following condition{\em
:}
   \begin{enumerate}
   \item[(a$^*$)] there exists $P\colon \borel{\rbb_+}
\to \ogr{\hh}$, an orthogonal
projection-valued semispectral measure
with compact support, such that
   \begin{align} \label{fdod}
F(\varDelta)=\int_{\varDelta} (x-1)^2 P(\D x), \quad \varDelta \in
\borel{\rbb_+}.
   \end{align}
   \end{enumerate}
   \end{itemize}
Moreover, if $T$ is quasinormal, and
$P$ is as in {\em (a$^*$)}, then
   \begin{align} \label{pfrc}
P(\varDelta) = \int_{\varDelta} \frac{1}{(x-1)^2}
F(\D x) = G\circ\phi^{-1}(\varDelta), \quad
\varDelta \in \borel{\rbb_+ \setminus \{1\}}.
   \end{align}
   \end{theorem}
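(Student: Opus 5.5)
The strategy is to route everything through Theorem~\ref{subn-1} and the formula \eqref{ddwj} for the semispectral measure $G\circ\phi^{-1}$ of a subnormal operator. The key known fact we use is the following: a subnormal $T$ is quasinormal if and only if $G\circ\phi^{-1}$ is spectral, i.e.\ $T^{*n}T^n=(T^*T)^n$ for all $n$. Indeed, if $T$ is quasinormal, then $T^{*n}T^n=(T^*T)^n$. Hence $\int x^n\,G\circ\phi^{-1}(\D x)=\int x^n\,E_{T^*T}(\D x)$. Since both measures are compactly supported in $\rbb_+$, determinacy gives $G\circ\phi^{-1}=E_{T^*T}$. Conversely, if $G\circ\phi^{-1}$ is spectral, then $T^{*2}T^2=(T^*T)^2$, which for subnormal (even hyponormal-free, general) $T$ forces quasinormality by a standard computation. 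Concretely, $\|(T^*T T - T T^*T)h\|^2$ expands into terms that cancel by means of $T^{*2}T^2=(T^*T)^2$ and $T^{*3}T^3=(T^*T)^3$, a known characterization (Jab{\l}o{\'n}ski--Jung--Stochel). Alternatively, one can use that $T^{*n}T^n=(T^*T)^n$ for all $n$ implies quasinormality.

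For (i)$\Rightarrow$(ii): quasinormal implies subnormal, so Theorem~\ref{subn-1} gives (b), (c) and integrability in (a), together with \eqref{ddwj}. Restricting \eqref{ddwj} to Borel sets $\varDelta\subseteq\rbb_+\setminus\{1\}$ yields $\int_\varDelta (x-1)^{-2}F(\D x)=G\circ\phi^{-1}(\varDelta)=E_{T^*T}(\varDelta)$, which is a projection. For general $\varDelta$ note $F(\{1\})=0$, so $\int_\varDelta (x-1)^{-2}F(\D x)=E_{T^*T}(\varDelta\setminus\{1\})$, again a projection-valued (non-normalized) measure. This also proves \eqref{pfrc}.

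For (ii)$\Rightarrow$(iii): set $P(\varDelta)=\int_\varDelta (x-1)^{-2}F(\D x)$. Then $P$ is a projection-valued semispectral measure, compactly supported because $F$ is, and $P(\{1\})=0$. Moreover, $\int_\varDelta (x-1)^2P(\D x)=F(\varDelta\setminus\{1\})=F(\varDelta)$, so \eqref{fdod} holds. For (iii)$\Rightarrow$(ii): given \eqref{fdod}, we get $F(\{1\})=0$, and $\int_\varDelta (x-1)^{-2}F(\D x)=P(\varDelta\setminus\{1\})$. A projection-valued semispectral measure is automatically spectral up to normalization, since $P(\varDelta)\le P(\rbb_+)$ are commuting projections. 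Hence $P(\cdot\setminus\{1\})$ is again projection-valued, which gives (a).

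For (ii)$\Rightarrow$(i), which I expect to be the main step, argue as follows. Condition (a) gives $\int(x-1)^{-2}F(\D x)=P(\rbb_+\setminus\{1\})\le I$, so Theorem~\ref{subn-1} applies and $T$ is subnormal. By \eqref{ddwj}, $G\circ\phi^{-1}(\varDelta)=P(\varDelta\setminus\{1\})+\delta_1(\varDelta)(I-P(\rbb_+\setminus\{1\}))$. This is a sum of the spectral measure $P$ on $\rbb_+\setminus\{1\}$ and the projection $I-P(\rbb_+\setminus\{1\})$ at the point $1$, which is orthogonal to it. Hence $G\circ\phi^{-1}$ is a normalized spectral measure $E$ on $\rbb_+$. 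Consequently \eqref{tobemom} gives $T^{*n}T^n=\int x^nE(\D x)=(\int x\,E(\D x))^n=(T^*T)^n$ for all $n$. Then $T$ is quasinormal, by the known characterization that $T^{*n}T^n=(T^*T)^n$ for all $n\in\zbb_+$ characterizes quasinormality (alternatively, $n=2,3$ suffice). If one prefers a self-contained route, one can verify directly that $\|T^*TTh-TT^*Th\|^2=0$ by expanding and using the identities for $n=2,3$. The delicate point is citing or checking exactly this last implication; the rest is bookkeeping with \eqref{ddwj} and $F(\{1\})=0$.
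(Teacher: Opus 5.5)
Your proposal is correct and follows essentially the same route as the paper. For (i)$\Rightarrow$(ii) you use Embry's characterization together with determinacy and \eqref{ddwj}. For (ii)$\Rightarrow$(i) you use Theorem~\ref{subn-1} and the orthogonality $P(\varDelta)(I-Q)=0$, with $Q=\int_{\rbb_+}\frac{1}{(x-1)^2}F(\D x)$, to make $G\circ\phi^{-1}$ spectral, and then Embry again. For (ii)$\Leftrightarrow$(iii) you use the density (chain-rule) computation. One blemish is harmless: your parenthetical suggests that $T^{*2}T^2=(T^*T)^2$ alone forces quasinormality for general $T$, which is false, since a single such identity does not suffice; your own expansion correctly uses $n=2,3$ together, and you end up invoking the all-$n$ criterion exactly as the paper does.
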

   \begin{proof}
    (i)$\Rightarrow$(ii) Since every quasinormal
operator is subnormal, we may apply
Theorem~\ref{subn-1}. It follows from Embry's
characterization of quasinormal operators (see
\cite[p.\ 63]{Embry73}) that
   \begin{align*} %\label{irct}
\int_{\mathbb{R}_+} x^n
G\circ\phi^{-1}(\D x)
\overset{\eqref{tobemom}}=T^{*n} T^{n}
= (T^{*} T)^{n} = \int_{\mathbb{R}_+}
x^n E_{T^*T}(\D x), \quad n\in \zbb_+,
   \end{align*}
where $E_{T^*T}$ is the spectral measure of $T^*T$.
Since the measures $\langle G\circ
\phi^{-1}(\cdot)h,h\rangle$ and $\langle
E_{T^*T}(\cdot)h,h\rangle$ are compactly supported,
the corresponding Stieltjes moment sequence is
determinate (see \cite[Corollary~4.2]{Sch17}). Hence,
$G\circ\phi^{-1}=E_{T^*T}$, and thus $G\circ\phi^{-1}$
is a spectral measure. Therefore, by \eqref{ddwj}, we
have
   \begin{align*}
\int_{\varDelta} &\frac{1}{(x-1)^2} F(\D x) =
\int_{\varDelta\setminus \{1\}} \frac{1}{(x-1)^2}
F(\D x) =G\circ\phi^{-1}(\varDelta\setminus
\{1\}),\quad \varDelta \in\borel{\mathbb{R}_+}.
   \end{align*}
It follows that $\int_{\varDelta} \frac{1}{(x-1)^2}\,F(\D x)$ is an
orthogonal projection for every $\varDelta\in\borel{\mathbb{R}_+}$,
and that the second equality in \eqref{pfrc} holds.

(ii)$\Rightarrow$(i) Since $Q:=\int_{\rbb_+} \frac{1}{(x-1)^2} F(\D
x)$ is an orthogonal projection, we deduce that $\int_{\rbb_+}
\frac{1}{(x-1)^2} F(\D x)\Le I$. Hence, by Theorem~\ref{subn-1}, $T$
is subnormal, so \eqref{tobemom} holds. It follows from {\rm (a)}
that the map $P\colon \borel{\rbb_+}\to\ogr{\hh}$, defined by
   \begin{align}  \label{rmev}
P(\varDelta) = \int_{\varDelta} \frac{1}{(x-1)^2} F(\D x), \quad
\varDelta \in \borel{\rbb_+},
   \end{align}
is a compactly supported orthogonal projection-valued semispectral
measure. Since $Q$ is an orthogonal projection, $I - Q$ is the
orthogonal projection onto $\ran(Q)^\perp$, where $\ran(S)$ denotes
the range of an operator $S\in \ogr{\hh}$. Note that $P(\varDelta)
\Le Q$ for every $\varDelta \in\borel{\mathbb{R}_+}$. Therefore,
$\ran(P(\varDelta)) \subseteq \ran(Q)$, which yields
   \begin{align*}
\ran(P(\varDelta)) \perp
\ran(Q)^{\perp} = \ran(I-Q),
   \end{align*}
or, equivalently,
   \begin{align} \label{rpqi}
P(\varDelta) (I-Q) = (I-Q) P(\varDelta)
= 0.
   \end{align}
This, together with \eqref{ddwj}, implies that
   \begin{align} \label{gpiq}
G\circ\phi^{-1}(\varDelta) =
P(\varDelta) + \delta_1(\varDelta)
(I-Q), \quad \varDelta \in
\borel{\mathbb{R}_+}.
   \end{align}
Combining \eqref{rpqi} and
\eqref{gpiq}, we conclude that
$G\circ\phi^{-1}$ is a spectral measure
satisfying \eqref{tobemom}. By Embry's
characterization of quasinormal
operators (see \cite[p.\ 63]{Embry73}),
$T$ is quasinormal.

(ii)$\Rightarrow$(iii) By (a), the map $P\colon
\borel{\rbb_+} \to \ogr{\hh}$, defined by
\eqref{rmev}, is a compactly supported orthogonal
projection-valued semispectral measure. Since
$F(\{1\})=0$, it follows from
\cite[Theorem~1.29]{Rud87} that, for all
$\varDelta \in \borel{\rbb_+}$ and $h \in \hh$,
   \begin{align*}
\left\langle \int_{\varDelta} (x-1)^2 P(\D x) h,h \right \rangle =
\int_{\varDelta} (x-1)^2 \langle P(\D x) h,h \rangle = \langle
F(\varDelta)h,h \rangle.
   \end{align*}
This yields (a$^*$).

(iii)$\Rightarrow$(ii) Applying
\cite[Theorem~1.29]{Rud87} once again, we obtain,
for every $\varDelta\in\borel{\rbb_+}$ and every
$h\in\hh$,
\begin{align*}
\int_{\varDelta}\frac{1}{(x-1)^2} \langle F(\D
x)h,h\rangle \overset{\eqref{fdod}}{=} \langle
P(\varDelta\setminus{1})h,h\rangle.
\end{align*}
This proves (a) and the first equality in
\eqref{pfrc}, thereby completing the proof.
   \end{proof}
Normal operators can be characterized as follows.
   \begin{prop}\label{thp3b}
If $T \in \ogr{\hh}$, then the following statements
are~equivalent{\em :}
\begin{itemize}
    \item[(i)] $T$ is normal,
    \item[(ii)] $T$ and $T^*$ are CPD and have the same representing
triplets.
   \end{itemize}
   \end{prop}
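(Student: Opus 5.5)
For (i)$\Rightarrow$(ii), note that a normal $T$ is quasinormal, hence subnormal. By Lambert's theorem it is CPD, and so is $T^*$, which is also normal. Normality gives $T^{*n}T^n = (T^*T)^n = (TT^*)^n = T^nT^{*n}$ for all $n\in\zbb_+$. Thus the operator sequences $\{T^{*n}T^n\}_{n=0}^\infty$ and $\{(T^*)^{*n}(T^*)^n\}_{n=0}^\infty$ coincide. The Lévy--Khintchine formula \eqref{cdr5} expresses each of them through the respective representing triplet, so the uniqueness part of Theorem~\ref{cpdops} shows that the two triplets are equal.

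For (ii)$\Rightarrow$(i), I will use only the case $n=1$ of \eqref{cdr5}. Since $Q_1=0$, taking $n=1$ gives
\begin{align*}
T^*T = I + B + C,
\end{align*}
which is also the first identity in \eqref{f2m-remi}. Apply the same formula to $T^*$, whose representing triplet is $(B,C,F)$ by hypothesis. Because $(T^*)^*T^* = TT^*$, this yields
\begin{align*}
TT^* = I + B + C.
\end{align*}
Hence $T^*T = TT^*$, so $T$ is normal.

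There is no genuine obstacle. The only point needing care is in (i)$\Rightarrow$(ii): uniqueness in Theorem~\ref{cpdops} must be applied to the identity \eqref{cdr5} holding for the operator sequence $T^{*n}T^n$ itself. This is legitimate because that identity depends on $T$ only through the sequence $\{T^{*n}T^n\}_{n=0}^\infty$, and this sequence is the same for $T$ and $T^*$. I will state this explicitly in the proof.
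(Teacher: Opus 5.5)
Your proposal is correct and follows essentially the same route as the paper. The paper compresses both directions into one observation: by uniqueness of the representing triplet, $T$ and $T^*$ have equal triplets if and only if $T^{*n}T^n=T^nT^{*n}$ for all $n\in\zbb_+$, which is equivalent to normality; your $n=1$ computation $T^*T=I+B+C=TT^*$ is just the explicit form of that last equivalence.
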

   \begin{proof}
It suffices to observe that, if $T$ and $T^*$ are CPD, then, by the
uniqueness of representing triplets (see Theorem~\ref{cpdops}), the
operators $T$ and $T^*$ have the same representing triplets if and
only if
\[
T^{*n}T^n = T^n T^{*n}, \quad n \in \mathbb{Z}_+,
\]
which, in turn, is equivalent to the normality of $T$.
   \end{proof}
   \begin{rem}
T. Ando proved in \cite[Theorem~5]{Ando72} that an operator $T \in
\ogr{\hh}$ is normal if and only if both $T$ and $T^*$ are
paranormal, and have the same kernels. This result may suggest that,
in Proposition~\ref{thp3b}(ii), it would suffice to repeat Ando's
assumptions with paranormality replaced by conditional positive
definiteness. Unfortunately, Example~4.9 of \cite{Buchala25} provides
a $3$-isometric operator $S$ such that $S^*$ is also a $3$-isometry.
Hence both $S$ and $S^*$ are injective CPD operators, since
$3$-isometries are always injective. Moreover, this particular
operator $S$ is not normal. Indeed, $S$ is a bilateral weighted shift
with $2\times2$ matrix weights $\{S_j\}_{j\in\zbb}$ such that $S_1$
is not unitary. Consequently, $S$ itself is not unitary. If $S$ were
normal, then, by \cite[Proposition~4.5]{Sh-At2000}, it would be an
isometry. Since $S$ has dense range, it would then be unitary, a
contradiction.
   \hfill $\diamondsuit$
   \end{rem}
   \section{\label{Sec.4}Subnormality of CPD $n$th roots}
In this section we show that subnormality is preserved under taking
CPD $n$th roots.
   \begin{theorem}\label{thp1}
Let \( n \Ge 2 \) be an integer, and
let \( T \in \ogr{\mathcal{H}} \) be a
CPD operator such that \( T^n \) is
subnormal. Then \( T \) is subnormal.
   \end{theorem}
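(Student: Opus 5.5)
We reduce everything to scalar sequences and use Lambert's theorem. Fix $h\in\hh$ and put $\gamma_k=\|T^k h\|^2$, $k\in\zbb_+$. Since $T$ is CPD, Theorem~\ref{cpdops} applied to the sesquilinear data gives the scalar L\'evy--Khintchine representation
\begin{align*}
\gamma_k = \|h\|^2 + k b + k^2 c + \int_{\rbb_+} Q_k(x)\, \mu(\D x), \quad k\in\zbb_+,
\end{align*}
where $b=\langle Bh,h\rangle\in\rbb$, $c=\langle Ch,h\rangle\Ge 0$, and $\mu=\langle F(\cdot)h,h\rangle$ is a compactly supported positive measure with $\mu(\{1\})=0$. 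By Lambert's theorem it suffices to show that $\{\gamma_k\}$ is PD, i.e.\ a Stieltjes moment sequence. We know two things: $\{\gamma_k\}$ is CPD, and, since $T^n$ is subnormal, so is $T^n$ restricted to each of the $T^n$-invariant vectors $T^r h$; hence each subsequence $\{\gamma_{nk+r}\}_{k=0}^\infty$, $r=0,\dots,n-1$, is a Stieltjes moment sequence, again by Lambert's theorem applied to $T^n$ and the vector $T^rh$.

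The key step is to compute $\gamma_{nk}$ from the formula above. Using \eqref{rnx-1}, for $x\neq1$ we have
\begin{align*}
Q_{nk}(x)=\frac{(x^n)^k-1-nk(x-1)}{(x-1)^2},
\end{align*}
so $\gamma_{nk}$ has the form $\alpha + \beta k + n^2 c\, k^2 + \int \frac{(x^n)^k}{(x-1)^2}\,\mu(\D x)$, with constants $\alpha,\beta$ coming from the terms $-\frac{1}{(x-1)^2}$ and $-\frac{nk}{x-1}$, at least formally. The integrability issues near $x=1$ must be handled by regrouping: write $(x^n)^k-1-nk(x-1)=\bigl[(x^n)^k-1-k(x^n-1)\bigr]+k\bigl[x^n-1-n(x-1)\bigr]$. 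This expresses $\{\gamma_{nk}\}_k$ as a CPD sequence with its own L\'evy--Khintchine triplet. The new measure is $\nu=\bigl(\tfrac{x^n-1}{x-1}\bigr)^2\mu$ pushed forward by $x\mapsto x^n$; it still has no atom at $1$, since $x^n=1$ forces $x=1$ on $\rbb_+$. The new quadratic coefficient is $n^2c$, and the new linear coefficient involves $b$ plus $\int Q_n\,\D\mu$. By uniqueness of the triplet (scalar version of Theorem~\ref{cpdops}) and the scalar form of Theorem~\ref{subn-1} applied to the Stieltjes sequence $\{\gamma_{nk}\}$, we conclude three things. First, $n^2c=0$, so $c=0$. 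Second, $\int \frac{1}{(y-1)^2}\,\nu(\D y)=\int\frac{1}{(x-1)^2}\,\mu(\D x)\le\|h\|^2$. Third, $\frac{1}{y-1}\in L^1(\nu)$ and the linear coefficient equals $\int\frac{1}{y-1}\,\nu(\D y)$.

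From the third point, $\int\frac{x^n-1}{(x-1)^2}\,\mu(\D x)$ is finite, and since $\frac{x^n-1}{x-1}$ is bounded below by a positive constant on the compact support, we get $\frac{1}{x-1}\in L^1(\mu)$. Rewriting the identity for the linear coefficient should then give, after cancellation, $nb=n\int\frac{1}{x-1}\,\mu(\D x)$, i.e.\ $b=\int\frac{1}{x-1}\,\mu(\D x)$. The three conditions (a)--(c) of Theorem~\ref{subn-1} then hold in scalar form for every $h$. Equivalently, $\gamma_k=\int x^k\,\rho(\D x)$ with
\begin{align*}
\rho=\frac{1}{(x-1)^2}\,\mu+\Bigl(\|h\|^2-\int\frac{1}{(x-1)^2}\,\D\mu\Bigr)\delta_1,
\end{align*}
so $\{\gamma_k\}$ is PD, and Lambert's theorem yields that $T$ is subnormal. (Alternatively, passing to operators, (a)--(c) hold for $(B,C,F)$ and Theorem~\ref{subn-1} applies directly.)

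I expect the main obstacle to be the bookkeeping in the second step. One has to verify rigorously that the regrouped expression is a genuine L\'evy--Khintchine representation of $\{\gamma_{nk}\}$, in particular that $\int Q_n\,\D\mu$ is finite, which holds because $Q_n$ is a polynomial. One must also justify the scalar uniqueness and the scalar version of Theorem~\ref{subn-1}, which follow from the cited results applied on the one-dimensional space, or from the uniqueness of the representing triple for CPD sequences in \cite{J-J-S22}. Finally, the linear coefficients must be matched carefully so that the condition $b=\int\frac{1}{x-1}\,\D\mu$ emerges exactly. Only the subsequence with $r=0$ is needed here.
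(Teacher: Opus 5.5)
Your argument is correct and is essentially the paper's proof. Both transfer conditions (a)--(c) of Theorem~\ref{subn-1} from $T^n$ back to $T$ via the transformation law $C\mapsto n^2C$, $F\mapsto \bigl((1+x+\cdots+x^{n-1})^2F\bigr)\circ\psi_n^{-1}$ with $\psi_n(x)=x^n$, and $B\mapsto nB+\int_{\rbb_+}Q_n\,\D F$; the only difference is that you derive this law yourself at the scalar level from the identity $Q_{nk}(x)=(1+x+\cdots+x^{n-1})^2Q_k(x^n)+kQ_n(x)$, whereas the paper imports it from \cite[Theorem~3.3.2]{J-J-S22} (as $M_n=\tilde M_n\circ\psi_n^{-1}$) and argues with operators.

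One small correction: the scalar uniqueness and the scalar form of Theorem~\ref{subn-1} do not follow by working on a one-dimensional space, since only geometric sequences arise there. Instead, take $\langle\,\cdot\,h,h\rangle$ in Theorems~\ref{cpdops} and~\ref{subn-1} for $T^n$ on $\hh$. Uniqueness then comes either from doing your regrouping at the operator level, or from determinacy, because the second differences of $\{\gamma_{nk}\}_{k}$ are the moments of the compactly supported measure $\nu+2n^2c\,\delta_1$.
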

   \begin{proof}
Since subnormal operators are CPD,
$T^n$ is CPD. Denote by $(B, C, F)$ and
$(B_n, C_n, F_n)$ the representing
triples of $T$ and $T^n$, respectively,
and by $M$ and $M_n$ the semispectral
measures corresponding to $T$ and $T^n$
via Theorem~\ref{dyl-an}(ii),
respectively. Since $T^n$ is subnormal,
it follows from Theorem~\ref{subn-1}
that
   \begin{align} \label{zjnrk}
C_n = 0 \quad \text{and} \quad \int_{\rbb_+}
\frac{1}{(x - 1)^2} F_n(\D x) \Le I.
   \end{align}
By \cite[Theorem~3.3.2]{J-J-S22}, we
have
   \begin{align}  \label{mnbp}
M_n (\varDelta) = \tilde M_n
(\psi_n^{-1} (\varDelta)), \quad
\varDelta \in \borel{\rbb_+},
   \end{align}
where $\tilde M_n \colon \borel{\rbb_+}
\to \ogr{\hh}$ is the semispectral
measure defined by
   \begin{align} \label{tmnx}
\tilde M_n(\varDelta) =
\int_{\varDelta} (1+ x + \cdots +
x^{n-1})^2 M(\D x), \quad \varDelta \in
\borel{\rbb_+},
   \end{align}
and $\psi_n\colon \rbb_+\to \rbb_+$ is
given by
   \begin{align} \label{psnt}
\psi_n(x)=x^n, \quad x \in \rbb_+.
   \end{align}
This, together with
Theorem~\ref{dyl-an}, yields
   \begin{align} \notag
0 &\overset{\eqref{zjnrk}}= 2 C_n =
M_n(\{1\}) \overset{\eqref{mnbp}}=
\tilde{M}_n(\psi_n^{-1}(\{1\}))
   \\ \notag
&\hspace{1ex}= \tilde{M}_n(\{1\})
\overset{\eqref{tmnx}}= \int_{\{1\}}
(1+x+\dots+x^{n-1})^2 M(\D x)
=n^2M(\{1\}).
   \end{align}
This implies that $M(\{1\}) = M_n(\{1\}) = \tilde
M_n(\{1\}) = 0$, which, together with
\eqref{f2m-semi} applied to $T$ and $T^n$, shows
that $F = M$ and $F_n=M_n$. Hence, we obtain
   \begin{align} \notag
\int_{\rbb_+} \frac{1}{(x - 1)^2}
\langle F(\D x)f,f\rangle &=
\int_{\rbb_+} \frac{1}{(x - 1)^2}
\langle M(\D x)f,f\rangle
   \\  \notag
&\hspace{-1ex}\overset{\eqref{tmnx}}=
\int_{\rbb_+} \frac{1}{(x -
1)^2(1+x+\dots+x^{n-1})^2} \langle
\tilde{M}_n(\D x)f,f\rangle
   \\  \notag
& =\int_{\rbb_+} \frac{1}{(x^n -
1)^2}\langle \tilde{M}_n(\D
x)f,f\rangle
   \\  \notag
& =\int_{\rbb_+} \frac{1}{(x - 1)^2}
\langle \tilde{M}_n\circ \psi_n^{-1}
(\D x)f,f\rangle
   \\ \label{ihpq}
& \hspace{-1ex} \overset{\eqref{mnbp}}
= \int_{\rbb_+} \frac{1}{(x - 1)^2}
\langle M_n(\D x)f,f\rangle
\overset{\eqref{zjnrk}} \Le \|f\|^2,
\quad f \in \hh.
   \end{align}
This implies that $\frac{1}{(x - 1)^2}
\in L^1(F)$ and
   \begin{align} \label{asd}
\int_{\rbb_+} \frac{1}{(x - 1)^2} \D
F(x) \Le I.
   \end{align}
Now, we intend to show that
   \begin{align} \label{bfrdx}
B=\int_{\rbb_+} \frac{1}{x - 1} F(\D
x).
   \end{align}
First, observe that by the
Cauchy-Schwartz inequality,
   \allowdisplaybreaks
   \begin{align*}
\int_{\rbb_+} \frac{1}{|x - 1|} \langle
F(\D x)f,f\rangle & \Le \sqrt{\langle
F(\rbb_+)f,f\rangle} \;\;
\sqrt{\int_{\rbb_+} \frac{1}{(x - 1)^2}
\langle F(\D x)f,f\rangle}
   \\
& \hspace{-1ex}\overset{\eqref{asd}}
\Le \|F(\rbb_+)\|^{1/2}\|f\|^2, \quad f
\in \hh.
   \end{align*}
This implies that $\frac{1}{|x - 1|} \in L^1(F)$,
and thus the right-hand side of \eqref{bfrdx} is
well defined. Applying Theorem~\ref{subn-1} to
$T^n$, we obtain
   \begin{align} \notag
B_n & = \int_{\rbb_+} \frac{1}{x - 1}
F_n(\D x) = \int_{\rbb_+} \frac{1}{x -
1} M_n(\D x)
   \\ \notag
& \hspace{-1ex} \overset{\eqref{mnbp}}=
\int_{\rbb_+} \frac{1}{x - 1}
(\tilde{M}_n \circ \psi_n^{-1})(\D x) =
\int_{\rbb_+} \frac{1}{x^n - 1}
\tilde{M}_n (\D x)
   \\ \notag
& \hspace{-1ex}\overset{\eqref{tmnx}} =
\int_{\rbb_+} \frac{(x^n - 1)^2}{(x - 1)^2(x^n -
1)} M(\D x) = \int_{\rbb_+} \frac{x^n - 1}{(x -
1)^2} M(\D x)
   \\ \label{dbxo}
& = \int_{\rbb_+} \frac{x^n - 1}{(x - 1)^2} F(\D
x).
   \end{align}
Since $M(\{1\})=0$, it follows from
\eqref{f2m-remi} that
   \begin{align} \label{cjwz}
C=0.
   \end{align}
Thus, by \eqref{cdr5}, we have
   \begin{align*}
T^{*n}T^n = I + n B + \int_{\rbb_+} Q_n
\, \D F.
   \end{align*}
Hence, applying \eqref{f2m-remi} to
$T^n$, we obtain
   \begin{align*}
B_n \overset{\eqref{zjnrk}}= B_{n} +
C_{n} = T^{*n}T^n - I = n B +
\int_{\rbb_+} Q_n \, \D F.
   \end{align*}
This implies that
   \begin{align*}
n B & = B_n - \int_{\mathbb{R}_+} Q_n \, \D F
\overset{\eqref{dbxo}} = \int_{\mathbb{R}_+}
\frac{x^n - 1}{(x-1)^2} \, F(\D x) -
\int_{\mathbb{R}_+} Q_n\, \D F
   \\
& \hspace{-1ex}\overset{\eqref{rnx-1}} =
\int_{\mathbb{R}_+} \frac{x^n - 1}{(x-1)^2} \,
F(\D x) - \int_{\mathbb{R}_+} \frac{x^n-1 - n
(x-1)}{(x-1)^2} \, F(\D x)
   \\
& = n \int_{\mathbb{R}_+} \frac{1}{x-1}
\, F(\D x).
   \end{align*}
Thus, we have proved that \eqref{bfrdx}
holds.

Combining \eqref{asd}, \eqref{bfrdx},
and \eqref{cjwz}, and using
Theorem~\ref{subn-1}, we conclude that
$T$ is subnormal. This completes the
proof.
   \end{proof}
   \begin{corollary}\label{thpc}
Let \( n \Ge 2 \) be an integer, and let \( T \in \ogr{\mathcal{H}}
\) be a CPD operator such that \( T^n \) is normal. Then \( T \) is
normal.
   \end{corollary}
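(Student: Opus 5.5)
Since $T^n$ is normal, it is in particular subnormal, so Theorem~\ref{thp1} shows that $T$ is subnormal. The task is therefore reduced to the classical fact that a subnormal $n$th root of a normal operator is normal. Subnormal operators are hyponormal, so Stampfli's theorem \cite[Theorem~5]{sta62}, that a hyponormal $n$th root of a normal operator is normal, finishes the argument. Alternatively, subnormal operators are paranormal, and Ando's theorem \cite[Theorem~6]{Ando72} applies.

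To keep the argument closer to the measure-theoretic spirit of the paper, I would also sketch a direct route. Let $N\in\ogr{\kk}$ be the minimal normal extension of $T$. Then $N^n$ is a normal extension of $T^n$. Since $T^n$ is normal, $\hh$ reduces $N^n$ (a normal operator restricted to an invariant subspace on which it is normal makes that subspace reducing). We then want to conclude that $\hh$ reduces $N$, which forces $\kk=\hh$ by minimality, and hence $T=N$ is normal. The step that $\hh$ reducing $N^n$ implies $\hh$ reducing $N$ is the delicate one, because $N$ is not in general a function of $N^n$. For this reason I would use the Stampfli/Ando citation rather than this direct approach.

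A third option goes through the CPD machinery. Since $T$ is subnormal, \eqref{tobemom} gives $T^{*k}T^k=\int x^k\,G\circ\phi^{-1}(\D x)$. Normality of $T^n$ gives $\|T^{n}h\|=\|T^{*n}h\|$, and one would try to propagate this to $\|Th\|=\|T^*h\|$. This again essentially reproduces Stampfli's argument.

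Concretely, the proof would have two lines. Theorem~\ref{thp1} gives that $T$ is subnormal, hence hyponormal (see \cite{Con91}). Then \cite[Theorem~5]{sta62} applied to the hyponormal $n$th root $T$ of the normal operator $T^n$ yields that $T$ is normal. The only point to check is that the cited results apply to all $n\Ge 2$ on arbitrary complex Hilbert spaces, which they do, so I expect no real obstacle.
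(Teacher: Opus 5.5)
Your proof is correct and is the same as the paper's: apply Theorem~\ref{thp1} to get that $T$ is subnormal, hence hyponormal, and then conclude normality from Stampfli's theorem \cite[Theorem~5]{sta62}. The alternative routes you sketch (Ando's theorem, the minimal normal extension, the CPD machinery) are not needed.
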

   \begin{proof}
Since $T^n$ is subnormal, Theorem~\ref{thp1} implies that $T$ is
subnormal (hence hyponormal), and the conclusion follows from
\cite[Theorem~5]{sta62}.
   \end{proof}
The following is a direct consequence of Theorem~\ref{thp1}.
   \begin{corollary}\label{col1}
If $T \in \ogr{\mathcal{H}}$ is not subnormal and $T^n$ is
subnormal for some integer $n \Ge 2$, then $T$ is not CPD.
   \end{corollary}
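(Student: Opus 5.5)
This is immediate from Theorem~\ref{thp1}, and I would argue by contraposition. Suppose $T \in \ogr{\hh}$ is not subnormal, $n \Ge 2$, and $T^n$ is subnormal. Assume, towards a contradiction, that $T$ is CPD.

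Then all hypotheses of Theorem~\ref{thp1} hold: $n \Ge 2$, $T$ is CPD, and $T^n$ is subnormal. Theorem~\ref{thp1} therefore gives that $T$ is subnormal, which contradicts the assumption on $T$. Hence $T$ is not CPD.

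The only point to check is the range of $n$. The corollary says ``for some integer $n \Ge 2$'', which matches the hypothesis of Theorem~\ref{thp1} exactly. The case $n=1$ would be vacuous anyway, since then $T^n = T$ could not be both subnormal and not subnormal. No step here is an obstacle; all the substantive work (through the measures $M$, $M_n$ and the relation \eqref{mnbp}) has already been done in the proof of Theorem~\ref{thp1}.

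For context, the corollary is useful as a source of non-CPD operators. Any non-subnormal $n$th root of a subnormal (for example, normal) operator, such as a nilpotent $T$ with $T^n = 0$ but $T \neq 0$, is automatically not CPD.
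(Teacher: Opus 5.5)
Your proof is correct and is exactly the paper's argument: the paper states this corollary as a direct consequence of Theorem~\ref{thp1}, i.e., its contrapositive, as you do. Your nilpotent example is also fine, since a nonzero nilpotent operator is not hyponormal and hence not subnormal, while $T^n=0$ is normal.
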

   \section{\label{Sec.5}Quasinormality and $m$-isometricity of CPD $n$th roots}
   As we show below, the class of quasinormal operators enjoys the
same property as the class of subnormal operators, namely, that it is
closed under taking CPD $n$th roots. We present two proofs. The first
is a direct consequence of Theorem~\ref{thp1} together with Theorem~
\ref{twPS}, while the second relies again on Theorem~\ref{thp1} and
on the characterization of quasinormality provided in
Theorem~\ref{thp3}.
   \begin{theorem} \label{thp5}
   Let $n\Ge 2$ be an integer, an let $T \in \ogr{\hh}$ be a CPD
operator such that $T^n$ is quasinormal. Then $T$ is quasinormal.
   \end{theorem}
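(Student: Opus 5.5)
The plan is to follow the first route suggested before the statement. Since every quasinormal operator is subnormal (\cite[Proposition~II.1.7]{Con91}), the hypothesis tells us that $T^n$ is subnormal. Theorem~\ref{thp1} then shows that the CPD operator $T$ is itself subnormal. So $T$ is a subnormal operator whose $n$th power is quasinormal, and Theorem~\ref{twPS} gives directly that $T$ is quasinormal. With the earlier results available, this route has no real obstacle; the only point to check is that Theorem~\ref{twPS} requires nothing beyond subnormality of $T$ and quasinormality of $T^n$, which is exactly what we have.

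For a second, self-contained proof we would go through Theorem~\ref{thp3}. Keep the notation from the proof of Theorem~\ref{thp1}: $(B,C,F)$ and $(B_n,C_n,F_n)$ are the representing triplets, and $M$, $M_n$ the measures from Theorem~\ref{dyl-an}. That proof already yields conditions (b) and (c) for $T$, together with $F=M$, $F_n=M_n$, $M_n=\tilde M_n\circ\psi_n^{-1}$, and
\[
\int_{\varDelta}\frac{1}{(x-1)^2}F(\D x)=\int_{\psi_n(\varDelta)}\frac{1}{(y-1)^2}F_n(\D y),
\]
where the identity comes from the change of variables used in \eqref{ihpq}.

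The key step is to verify condition (a) of Theorem~\ref{thp3} for $T$. Because $\psi_n$ is a bijection of $\rbb_+$ onto itself, the displayed identity holds for every Borel set $\varDelta$: apply the computation in \eqref{ihpq} to $\chi_{\varDelta}$ and use $\chi_{\varDelta}=\chi_{\psi_n(\varDelta)}\circ\psi_n$. Since $T^n$ is quasinormal, Theorem~\ref{thp3} applied to $T^n$ shows that the right-hand side is an orthogonal projection for every Borel set. Hence the left-hand side is an orthogonal projection for every $\varDelta$, which is condition (a) for $T$. Theorem~\ref{thp3} then gives that $T$ is quasinormal.

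The main obstacle in this second route is justifying that the measure-transport identity holds setwise rather than only for the total integral. This follows from the bijectivity of $\psi_n$ together with the fact that $\psi_n(\varDelta)$ is Borel, so the set-indexed version of the computation in \eqref{ihpq} is legitimate.
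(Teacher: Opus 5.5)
Your proposal is correct and matches the paper. Your first route is exactly the paper's Proof~I (quasinormal implies subnormal, then Theorem~\ref{thp1} and Theorem~\ref{twPS}), and your second route is the paper's Proof~II, which likewise derives the setwise transport identity \eqref{rvsw} and transfers the projection property from $T^n$ to $T$ via Theorem~\ref{thp3}; the only cosmetic difference is that you read off (b), (c), $\frac{1}{(x-1)^2}\in L^1(F)$ and $F=M$ from the proof of Theorem~\ref{thp1}, whereas the paper gets them by applying Theorem~\ref{subn-1} and \eqref{feme} to the already subnormal $T$.
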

   \begin{proof}[Proof I of Theorem~\ref{thp5}]
Since $T^n$ is quasinormal, it is
subnormal. Hence, by
Theorem~\ref{thp1}, $T$ is subnormal,
and consequently, by
Theorem~\ref{twPS}, $T$ is quasinormal.
   \end{proof}
\begin{proof}[Proof II of Theorem~\ref{thp5}]
As shown above, we deduce that $T$ is
subnormal. Since subnormal operators
are CPD, it follows that $T^n$ is CPD.
Denote by $(B, C, F)$ and $(B_n, C_n,
F_n)$ the representing triples of $T$
and $T^n$, respectively, and by $M$ and
$M_n$ the corresponding semispectral
measures as in
Theorem~\ref{dyl-an}(ii). It follows
from Theorem~\ref{subn-1} that
   \begin{align} \label{wltq}
\frac{1}{(x-1)^2} \in {L}^1({F}), \;\;
\frac{1}{x-1} \in {L}^1({F}), \;\; B =
\int_{\mathbb{R}_+} \frac{1}{x-1} \;\;
F(\D x) \;\; \text{and} \;\; C = 0.
   \end{align}
Applying \eqref{feme} to $T$ and $T^n$, we deduce
that $F=M$ and $F_n=M_n$. The measure transport
theorem then implies that (cf.\ \eqref{ihpq})
   \allowdisplaybreaks
   \begin{align}  \notag
\int_\varDelta \frac{1}{(x-1)^2} F(\D x) & =
\int_\varDelta \frac{1}{(x-1)^2} M( \D x )
   \\ \notag
&\hspace{-1ex}\overset{\eqref{tmnx}}{=}
\int_{\mathbb{R}_+}
\frac{\chi_\varDelta(x)}{(x-1)^2
(1+x+\cdots+x^{n-1})^2} \tilde{M}_n(\D x)
   \\  \notag
& =\int_{\mathbb{R}_+}
\frac{\chi_\varDelta(\psi_n^{-1}(\psi_n(x))
}{(\psi_n(x)-1)^2} \tilde{M}_n(\D x)
   \\  \notag
& =\int_{\mathbb{R}_+}
\frac{\chi_{\varDelta}(\psi_n^{-1}(x)) }
{(x-1)^2} \tilde{M}_n \circ\psi_n^{-1}(\D x)
   \\ \notag
& \hspace{-1ex} \overset{\eqref{mnbp}}{=}
\int_{\psi_n(\varDelta)}
\frac{1}{(x-1)^2}{M}_n(\D x)
   \\  \label{rvsw}
& = \int_{\psi_n(\varDelta)}
\frac{1}{(x-1)^2}{F}_n(\D x), \quad \varDelta \in
\borel{\rbb_+},
   \end{align}
where $\psi_n$ is as in \eqref{psnt}.
Since $T^n$ is quasinormal, it follows
from Theorem~\ref{thp3} that, for every
$\varDelta \in \borel{\rbb_+}$, the
operator $\int_{\psi_n(\varDelta)}
\frac{1}{(x-1)^2}{F}_n(\D x)$ is an
orthogonal projection. Hence, by
\eqref{rvsw}, for every $\varDelta \in
\borel{\rbb_+}$, the operator
$\int_{\varDelta} \frac{1}{(x-1)^2}F(\D
x)$ is an orthogonal projection.
Combined with \eqref{wltq} and
Theorem~\ref{thp3}, this implies that
$T$ is quasinormal.
   \end{proof}
Since subnormal operators are
automatically CPD, Theorem~\ref{twPS}
is a direct consequence of
Theorem~\ref{thp5}. Hence, the second
proof of the latter provides a further
justification of the former.

The class of $3$-isometries is yet another class that
enjoys the property of being closed under taking CPD $n$th
roots.
   \begin{theorem} \label{thp52}
Let $m,n \Ge 2$ be integers, and let $T
\in \ogr{\mathcal{H}}$ be a CPD
operator such that $T^n$ is an
$m$-isometry. Then $T$ is a
$3$-isometry.
   \end{theorem}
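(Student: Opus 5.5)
The plan is to show that the semispectral measure $M$ of $T$ from Theorem~\ref{dyl-an}(ii) is concentrated at the point $1$. Then $F=0$, and the L\'evy--Khintchine formula \eqref{cdr5} reduces to a quadratic polynomial in the exponent, which forces $\bscr_3(T)=0$. We transfer information from $T^n$ to $T$ through the relations \eqref{mnbp}--\eqref{psnt} (i.e.\ \cite[Theorem~3.3.2]{J-J-S22}), as in the proof of Theorem~\ref{thp1}.

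\emph{Step 1: $T^n$ is CPD and $M_n$ is concentrated at $1$.} Set $S=T^n$. Since $S$ is an $m$-isometry, the standard fact about $m$-isometries gives that for each $h\in\hh$ the sequence $k\mapsto \|S^kh\|^2$ is a real polynomial $p_h$ in $k$ of degree at most $m-1$. Its second forward difference is
\[
\Delta^2 p_h(k)=\langle S^{*k}\bscr_2(S)S^k h,h\rangle ,
\]
which is again a polynomial $q_h$ in $k$. We also need $S$ to be CPD. This is immediate, because $T$ is CPD and powers of CPD operators are CPD by \cite[Theorem~3.3.2]{J-J-S22}, the same fact used in the proof of Theorem~\ref{thp1}. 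So Theorem~\ref{dyl-an}(ii) yields $q_h(k)=\int_{\rbb_+}x^k\,\langle M_n(\D x)h,h\rangle$ for all $k\in\zbb_+$.

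Now let $\nu=\langle M_n(\cdot)h,h\rangle$, a compactly supported positive measure whose moments equal the polynomial $q_h(k)$.
\begin{itemize}
\item If $\nu((1+\varepsilon,\infty))>0$ for some $\varepsilon>0$, the moments grow at least like $(1+\varepsilon)^k$, which is impossible for a polynomial. Hence $\operatorname{supp}\nu\subseteq[0,1]$.
\item Then $q_h(k)=\nu(\{1\})+\int_{[0,1)}x^k\,\D\nu$ is bounded and converges as $k\to\infty$. A convergent polynomial is constant, so $q_h(k)=\nu(\{1\})$ for all $k$.
\item Evaluating at $k=0$ gives $\nu([0,1))=0$.
\end{itemize}
Thus $\langle M_n(\rbb_+\setminus\{1\})h,h\rangle=0$ for every $h$, that is, $M_n(\rbb_+\setminus\{1\})=0$. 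I expect this step to be the main technical point: one must make sure that the CPD property of $T^n$ and the polynomial structure of $m$-isometries are combined correctly, vectorwise.

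\emph{Step 2: pull back to $T$.} By \eqref{mnbp} we have $0=M_n(\rbb_+\setminus\{1\})=\tilde M_n(\psi_n^{-1}(\rbb_+\setminus\{1\}))=\tilde M_n(\rbb_+\setminus\{1\})$, since $\psi_n^{-1}(\{1\})=\{1\}$ on $\rbb_+$. By \eqref{tmnx}, $\tilde M_n$ has density $(1+x+\dots+x^{n-1})^2\Ge 1$ with respect to $M$. Hence $M(\rbb_+\setminus\{1\})=0$, and \eqref{f2m-semi} gives $F=0$.

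\emph{Step 3: conclude.} With $F=0$, formula \eqref{cdr5} reads $T^{*k}T^k=I+kB+k^2C$ for all $k\in\zbb_+$. This is a polynomial of degree at most $2$ in $k$, so its third forward difference at $k=0$ vanishes, i.e.
\[
\bscr_3(T)=\sum_{j=0}^3(-1)^{3-j}\tbinom{3}{j}T^{*j}T^j=0 .
\]
Therefore $T$ is a $3$-isometry.
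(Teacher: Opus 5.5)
Your proof is correct and follows the same route as the paper. You show that $M_n$ is concentrated at $1$, pull this back to $M$ through \eqref{mnbp} and \eqref{tmnx} using $(1+x+\cdots+x^{n-1})^2\Ge 1$, and conclude; the only difference is that the paper twice cites \cite[Proposition~4.3.1]{J-J-S22} (a CPD $m$-isometry has $\operatorname{supp} M_n\subseteq\{1\}$, and conversely $\operatorname{supp} M\subseteq\{1\}$ forces a $3$-isometry), whereas you prove both directions inline, by the polynomial-versus-moment growth argument in Step~1 and by \eqref{cdr5} with $F=0$ in Step~3.
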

   \begin{proof}
    By \cite[Proposition~3.3.2]{J-J-S22}, the operator
$T^n$ is CPD and, by assumption, an $m$-isometry. It
follows from \cite[{Proposition~4.3.1}]{J-J-S22} that $T^n$
is a $3$-isometry and that $\mathrm{supp}(M_n) \subseteq
\{1\}$, where $M_n$ is the semispectral measure associated
with $T^n$ via Theorem~\ref{dyl-an}. On the other hand,
again by \cite[{Proposition~3.3.2}]{J-J-S22}, we have
    \begin{align*}
0 &= M_n(\mathbb{R}_+ \setminus \{1\})
\overset{\eqref{mnbp}}{=}
\tilde{M}_n(\psi_n^{-1}(\mathbb{R}_+
\setminus \{1\}))
=\tilde{M}_n(\mathbb{R}_+ \setminus
\{1\})
   \\
&\hspace{-1ex}\overset{\eqref{tmnx}}{=}
\int_{\mathbb{R}_+ \setminus \{1\}}
(1+x+\cdots+x^{n-1})^2 M(\D x),
  \end{align*}
where $M$ is the semispectral measure associated with $T$
via Theorem~\ref{dyl-an}, and $\psi_n$ is defined in
\eqref{psnt}. Consequently, $M(\mathbb{R}_+ \setminus
\{1\}) = 0$, which is equivalent to $\mathrm{supp}(M)
\subseteq \{1\}$. Applying
\cite[{Proposition~4.3.1}]{J-J-S22} once more, we conclude
that $T$ is a $3$-isometry.
   \end{proof}
It is worth recalling that, similarly to the class of CPD operators,
the class of $m$-isometries is closed under taking arbitrary positive
powers (see \cite[Theorem~2.3]{Ja02}). Moreover, every CPD
$m$-isometry is automatically a $3$-isometry.
   \section{\label{Sec.6}Examples}
Let $\EuScript{NMD}$ and $\EuScript{CPD}$ denote the classes of
normaloid operators and CPD operators, respectively. In this section,
we show that
   \begin{align}  \label{2ryl}
\EuScript{NMD} \not\subseteq \EuScript{CPD} \quad \text{and} \quad
\EuScript{CPD} \not\subseteq \EuScript{NMD}.
   \end{align}
Recall that an operator $T \in \ogr{\hh}$ is called {\em normaloid}
if $r(T)=\|T\|$, where $r(T)$ denotes the spectral radius of $T$.

We begin by proving, using a purely theoretical argument, that the
first relation listed in \eqref{2ryl} holds when the class
$\EuScript{NMD}$ is replaced by any scalable class of Hilbert space
operators that properly contains the class of subnormal operators.
Recall that a class $\EuScript{C}$ of Hilbert space operators is
called {\em scalable} if $\alpha T \in \EuScript{C}$ for every
$\alpha \in (0,\infty)$. Among others, the classes of normal,
subnormal, hyponormal, paranormal, and normaloid operators are
scalable (see \cite{Fur01} for more examples). On the other hand, the
classes of unitary, $m$-isometric, $2$-hyperexpansive, and CPD
operators are not scalable (see \cite[Lemma~1.21]{Ag-St1},
\cite[Lemma~1]{Rich88}, and \cite[Corollary~3.4.6]{J-J-S22}).
   \begin{prop}
Let $\EuScript{C}$ be a scalable class of Hilbert space operators
that is not contained in the class of subnormal operators. Then
$\EuScript{C} \not\subseteq \EuScript{CPD}$.
   \end{prop}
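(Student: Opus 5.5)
Take $T\in\EuScript{C}$ that is not subnormal; such a $T$ exists by hypothesis. Assume, towards a contradiction, that $\EuScript{C}\subseteq\EuScript{CPD}$. Scalability then gives $\alpha T\in\EuScript{CPD}$ for every $\alpha\in(0,\infty)$. The plan is to show that an operator all of whose positive multiples are CPD must be subnormal. This contradicts the choice of $T$, and subnormality of $\alpha T$ for a single $\alpha>0$ already suffices, since subnormality is scalable.

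The cleanest route is through the sequences themselves. Fix $h\in\hh$ and set $\gamma_k=\|T^kh\|^2$. The assumption says that for every $\alpha>0$ the sequence $\{\alpha^{2k}\gamma_k\}_{k=0}^\infty$ is CPD. I will use the classical fact from \cite{B-C-R} on the semigroup $\zbb_+$: if $\{t^k\gamma_k\}_{k}$ is CPD for all $t>0$, then $\gammab$ is PD. Directly, take $\{\lambda_i\}_{i=0}^k$ with no constraint and put $\mu_i=\lambda_i$ for $i\Ge1$. Adjoin an index to absorb the constraint. Concretely, apply the CPD inequality for $\{t^{j}\gamma_j\}$ to the coefficients $(\lambda_0-s,\lambda_1 t^{-1},\dots)$, or shift the support by one index so that a free coefficient at a new position enforces $\sum=0$. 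Then let $t$ vary so that the contribution of the adjoined coefficient becomes negligible relative to $\sum\gamma_{i+j}\lambda_i\bar\lambda_j$.

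An alternative, which I would try first because it uses the paper's machinery, is the representing triplets. By Theorem~\ref{dyl-an}, each $\alpha T$ has a compactly supported $M_\alpha$ with $(\alpha T)^{*n}\bscr_2(\alpha T)(\alpha T)^n=\int x^n M_\alpha(\D x)$. Here $\bscr_2(\alpha T)=\alpha^4T^{*2}T^2-2\alpha^2T^*T+I$. Hence the sequence $\alpha^{2n}T^{*n}(\alpha^4T^{*2}T^2-2\alpha^2T^*T+I)T^n$ is a Stieltjes moment sequence for every $\alpha$. Dividing by $\alpha^4$ and letting $\alpha\to\infty$ along the rescaled measures shows that $\{T^{*n+2}T^{n+2}\}$ is a limit of Stieltjes moment sequences, hence PD. More useful is $\alpha\to0$ after dividing appropriately: the $I$-term dominates and one extracts positive definiteness of $\{\|T^nh\|^2\}$. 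Since $\sum_{i,j}\lambda_i\bar\lambda_j\langle T^{*(i+j)}\bscr_2(\alpha T)T^{i+j}\cdots\rangle\Ge0$ must hold with the free parameter $\alpha$, expanding in powers of $\alpha^2$ and isolating the lowest-order term should produce the PD inequality \eqref{virek} for $\{\|T^nh\|^2\}$. Lambert's theorem then makes $T$ subnormal.

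The main obstacle is making the limiting or leading-coefficient extraction rigorous. Concretely, one must show that PD of $\{\|T^nh\|^2\}$ follows, and not merely PD of a shifted or tail sequence. My first attempt: write the CPD inequality for $\{\alpha^{2j}\gamma_j\}$ with coefficients $\mu_0=-\sum_{i}\lambda_i\alpha^{i}$ placed at index $0$ and $\mu_{i+1}=\lambda_i\alpha^{-(i+1)}$, or a similar weighting. The goal is that, after the rescaling, the terms involving $\mu_0$ vanish as $\alpha\to\infty$ or $\alpha\to0$ while the remaining quadratic form is exactly $\sum\gamma_{i+j+2}\lambda_i\bar\lambda_j$ or $\sum\gamma_{i+j}\lambda_i\bar\lambda_j$. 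Getting both $\gammab$ and its shift to be PD then yields the Stieltjes property, hence subnormality via Lambert.
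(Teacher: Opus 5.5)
\textbf{The key lemma is missing.} Your outer argument is the paper's. Under $\EuScript{C}\subseteq\EuScript{CPD}$, scalability gives $\alpha T\in\EuScript{CPD}$ for all $\alpha>0$. Everything then hinges on the lemma that this forces $T$ to be subnormal, which the paper takes from \cite[Corollary~3.4.6]{J-J-S22}. You do not supply this lemma. The ``classical fact from \cite{B-C-R}'' is invoked without a precise reference, and it is exactly what has to be shown. The one explicit construction you give does not work:
\begin{itemize}
\item As written, $\mu_0=-\sum_i\lambda_i\alpha^i$, $\mu_{i+1}=\lambda_i\alpha^{-(i+1)}$ violates $\sum_i\mu_i=0$.
\item If you correct the weights to $\mu_{i+1}=\alpha^{-2(i+1)}\lambda_i$ and $\mu_0=-\sum_i\alpha^{-2(i+1)}\lambda_i$, the CPD form of $\{\alpha^{2j}\gamma_j\}$ becomes
\[
\sum_{i,j}\gamma_{i+j+2}\lambda_i\bar\lambda_j+2\operatorname{Re}\Bigl(\mu_0\sum_j\gamma_{j+1}\bar\lambda_j\Bigr)+\gamma_0|\mu_0|^2 .
\]
The extra terms vanish only as $\alpha\to\infty$ and blow up as $\alpha\to0$. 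So you obtain positive definiteness of the tail $\{\gamma_{n+2}\}$ only.
\item The tail result is genuinely too weak. The weighted shift $W$ with weights $\{a,1,1,\ldots\}$, $a>1$, satisfies $\|W^{n+2}h\|^2=\|W^2h\|^2$ for all $n$ and $h$, yet it is not subnormal by \eqref{cduq}.
\item Likewise, ``isolating the lowest-order term'' in $\alpha$ without rescaling the coefficients gives only $|\lambda_0|^2\gamma_0\Ge0$.
\end{itemize}

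\textbf{How to close the gap.} The missing step is short.
\begin{itemize}
\item If $\sum_i\lambda_i=0$, write $\sum_i\lambda_iz^i=(1-z)\sum_i\mu_iz^i$. This shows that a real sequence $\{s_n\}$ is CPD if and only if $\{s_n-2s_{n+1}+s_{n+2}\}_{n=0}^\infty$ is PD.
\item For $s_n=t^n\gamma_n$ with $t=\alpha^2$, the second difference is $t^n(\gamma_n-2t\gamma_{n+1}+t^2\gamma_{n+2})$. Since $\lambda_i\mapsto t^i\lambda_i$ is a bijection, this is PD if and only if $\{\gamma_n-2t\gamma_{n+1}+t^2\gamma_{n+2}\}$ is PD.
\item Letting $t\to0^+$ shows that $\{\|T^nh\|^2\}$ is PD for every $h$, and Lambert's theorem gives subnormality of $T$. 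No separate treatment of the shifted sequence is needed, since it is the same statement applied to $Th$.
\end{itemize}
This is what your remark ``$\alpha\to0$ after dividing appropriately'' must mean: divide the $n$th moment of $M_\alpha$ by $\alpha^{2n}$. In your adjoined-index language, the balancing coefficient has to sit at an index $N>k$. Take $\mu_i=t^{-i}\lambda_i$ for $i\Le k$ and $\mu_N=-\sum_{i\Le k}t^{-i}\lambda_i$; then $t^N\mu_N\to0$ as $t\to0^+$, and both extra terms vanish. With this inserted, your proof is complete and coincides with the paper's, with the cited corollary replaced by an elementary argument.
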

   \begin{proof}
Suppose, to the contrary, that $\EuScript{C} \subseteq
\EuScript{CPD}$. Let $T \in \EuScript{C}$. Then $\alpha T \in
\EuScript{C}$ for all $\alpha \in (0,\infty)$, and thus $\alpha T \in
\EuScript{CPD}$ for all $\alpha \in (0,\infty)$. By
\cite[Corollary~3.4.6]{J-J-S22}, this implies that $T$ is subnormal.
Hence, $\EuScript{C}$ is contained in the class of subnormal
operators, a contradiction.
   \end{proof}
To complement the preceding theoretical argument, we now provide a
concrete example illustrating the first relation listed in
\eqref{2ryl}, based on an idea due to Stampfli (see
\cite[pp.~378--379]{sta66}).
   \begin{exa}
The following two facts are needed in this example.
   \begin{align}  \label{cduq}
   \begin{minipage}{69ex}
{\em For every $\alpha \in (0,\infty)$, a unilateral weighted shift
with weights $\{\alpha, 1, 1, \ldots\}$ is subnormal if and only if
$\alpha \Le 1$ $($see \cite{sta66}$)$}.
   \end{minipage}
   \\[1ex] \label{cdxq}
   \begin{minipage}{69ex}
{\em For every $\kappa \in \natu$ and every sequence
$\{\alpha_j\}_{j=0}^{\kappa} \subseteq (0,\infty)$ such that
$\alpha_j \neq 1$ for some $j \in \{1, \ldots, \kappa\}$, the
unilateral weighted shift with weights $\{\alpha_0, \ldots,
\alpha_\kappa, 1, 1, \ldots\}$ is not subnormal $($use
\cite[Theorem~6]{sta66}$)$}.
   \end{minipage}
   \end{align}

Let $\kappa \Ge 2$ be a fixed integer, and let $T \in \ogr{\ell^2}$
be the unilateral weighted shift with weights
$\{\lambda_n\}_{n=0}^{\infty} \subseteq (0,1]$ such that $\lambda_n =
1$ for every $n \Ge \kappa$. Note that $T$ is normaloid because, by
\cite[Problem~91]{hal82}, we have
   \begin{align*}
r(T) = \lim_{n \to \infty} \sup_{k \Ge 0} \left( \prod_{j=0}^{n-1}
\lambda_{k+j} \right)^{1/n} = 1 = \sup_{k \Ge 0}\lambda_k = \|T\|.
   \end{align*}

The next observation is that $T^n$ is subnormal for every $n \Ge
\kappa$. Indeed, if $n \in \natu$, then the operator $T^n$ is
unitarily equivalent to the orthogonal sum $\bigoplus_{r=0}^{n-1}
T_{n,r}$ of unilateral weighted shifts $T_{n,r}$ with weights
$\big\{\prod_{j=0}^{n-1}\lambda_{r+kn+j}\big\}_{k=0}^{\infty}$, where
$r \in \{0, \ldots, n-1\}$. This implies that if $n \Ge \kappa$, then
the weights of $T_{n,r}$ take the form
   \begin{align*}
\bigg\{\prod_{j=0}^{n-1}\lambda_{r+j}, 1, 1, \ldots\bigg\}.
   \end{align*}
Since $\prod_{j=0}^{n-1}\lambda_{r+j} \Le 1$, we deduce from
\eqref{cduq} that each of the operators $T_{n,0}$, \ldots,
$T_{n,n-1}$ is subnormal, and consequently $T^n$ is subnormal as
well.

Suppose additionally that $\lambda_j \neq 1$ for every $j \in \{1,
\ldots, \kappa-1\}$. Then $T^n$ is not subnormal for every $n \in
\{1, \ldots, \kappa-1\}$. Indeed, otherwise there exists $n \in \{1,
\ldots, \kappa-1\}$ such that $T^n$ is subnormal, which implies that
$T_{n,0}$ is subnormal. Then the weights of $T_{n,0}$ take the form
   \begin{align*}
\bigg\{\prod_{j=0}^{n-1}\lambda_{j}, \prod_{j=0}^{n-1}\lambda_{n+j},
\ldots, \prod_{j=0}^{n-1}\lambda_{(\kappa-1)n+j}, 1, 1,
\ldots\bigg\}.
   \end{align*}
Since $\prod_{j=0}^{n-1}\lambda_{n+j} \neq 1$, we deduce from
\eqref{cdxq} that $T_{n,0}$ is not subnormal, a contradiction. By
Corollary~\ref{col1}, $T$ is not CPD.

Under the above assumptions, the operator $T$ has the following
properties:
   \begin{enumerate}
   \item[$\bullet$] $T$ is normaloid,
   \item[$\bullet$] $T^n$ is subnormal for every  $n \Ge \kappa$,
   \item[$\bullet$] $T^n$ is not subnormal for every $n \in \{1, \ldots, \kappa-1\}$,
   \item[$\bullet$] $T$ is hyponormal or not, depending on whether the
sequence $\{\lambda_n\}_{n=0}^{\kappa - 1}$ is monotonically
increasing or not; both possibilities can be realized,
   \item[$\bullet$] $T$ is not CPD.
   \hfill $\diamondsuit$
   \end{enumerate}
   \end{exa}
The next example concerns the second relation listed in \eqref{2ryl}.
   \begin{exa}
Let $T\in \ogr{\hh}$ be a non-isometric $3$-isometry. Then, by
\cite[Proposition~4.3.1]{J-J-S22}, $T$ is CPD. We show that $T$ is
not normaloid. Indeed, suppose to the contrary that $T$ is normaloid.
Then $r(T)=\|T\|$, and by \cite[Lemma~1.21]{Ag-St1}, $r(T)=1$, hence
$\|T\|=1$. However, by \cite{Ag-St1} (see also
\cite[Theorem~4.4]{J-J-S2020}), there exists a vector $h\in \hh$ such
that $\|T^n h\|^2$ is a polynomial in $n$ of degree at least $1$ (and
at most $2$). Consequently,
\[
\lim_{n\to\infty} \|T^n h\|^2 = \infty,
\]
which contradicts the inequality $\|T\|\Le 1$.

We now provide a concrete example via a unilateral weighted shift.
Let $p \in \mathbb{C}[x]$ be a polynomial of degree at least $1$ and
at most $2$ such that $p(n) > 0$ for every $n \in \zbb_+$. Define the
sequence $\{\lambda_n\}_{n=0}^{\infty} \subseteq (0,\infty)$ by
   \begin{align*}
\lambda_n = \sqrt{\frac{p(n+1)}{p(n)}}, \quad n \in \zbb_+.
   \end{align*}
Then, by \cite[Theorem~2.1]{Ab-Le16} (see also
\cite[Proposition~A.2]{J-J-S2020}), the unilateral weighted shift $T$
with weights $\{\lambda_n\}_{n=0}^{\infty}$ is a non-isometric
$3$-isometry.
   \hfill $\diamondsuit$
   \end{exa}
% \appendix

   \bibliographystyle{amsalpha}

\begin{thebibliography}{99}
\bibitem{Ab-Le16} B. Abdullah, T. Le,
The structure of $m$-isometric weighted shift operators, {\em Oper.
Matrices} {\bf 10} (2016), 319-334.

\bibitem{Ando72}
T. Ando, Operators with a norm condition,
{\em Acta Sci. Math. $($Szeged\/$)$} {\bf
33} (1972), 169--178.

%\bibitem{Ag85}
%J. Agler, Hypercontractions and
%subnormality, {\em J. Operator Theory}
%{\bf 13} (1985), 203--217.

   \bibitem{Ag-St1} J. Agler, M. Stankus, $m$-isometric
transformations of Hilbert spaces, I, {\it Integr. Equ. Oper. Theory}
{\bf 21} (1995), 383-429.
   \bibitem{Ag-St2} J. Agler, M. Stankus, $m$-isometric
transformations of Hilbert spaces, II, {\it Integr. Equ. Oper.
Theory} {\bf 23} (1995), 1-48.
   \bibitem{Ag-St3} J. Agler, M. Stankus, $m$-isometric
transformations of Hilbert spaces, III, {\it Integr. Equ. Oper.
Theory} {\bf 24} (1996), 379-421.

   \bibitem{Ash00}
R. B. Ash, {\em Probability and measure theory},
Harcourt/Academic Press, Burlington, 2000.

\bibitem{B-C-R}
C. Berg, J. P. R. Christensen, P. Ressel,
{\em Harmonic analysis on semigroups},
Springer-Verlag, Berlin, 1984.

\bibitem{Bir-Sol87}
M. Sh. Birman, M. Z. Solomjak, {\em Spectral
Theory of selfadjoint operators in Hilbert
space}, D. Reidel Publishing Co., Dordrecht,
1987.

\bibitem{brow53}
A. Brown, On a class of operators, {\em
Proc. Amer. Math. Soc.} {\bf 4} (1953),
723--728.

\bibitem{Buchala25} M. Bucha{\l}a, $m$-isometric
weighted shifts with operator weights, {\em Bull. Malays. Math. Sci.
Soc.} {\bf 48} (2025), 71.

\bibitem{Cha-Sh15} S. Chavan, V. M. Sholapurkar, {\em Completely
monotone functions of finite order and Agler's conditions}, {Studia
Math.} {226} (2015), 229-258.

\bibitem{Con91}
J. B. Conway, {\em The theory of
subnormal operators}, Math. Surveys
Monographs, Amer. Math. Soc., Providence,
1991.

\bibitem{con87}
J. B. Conway, B. B. Morrel, Roots and
logarithms of bounded operators on
Hilbert space, {\em J. Funct. Anal.} {\bf
70} (1987), 171--193.

\bibitem{Curto20}
R. E. Curto, S. H. Lee, J. Yoon,
Quasinormality of powers of commuting
pairs of bounded operators, {\em J.
Funct. Anal.} {\bf 278} (2020), 108342.

\bibitem{CurtoPrasad2026}
R. E. Curto, T. Prasad, Classes of
operators related to subnormal
operators, {\em Rev. Real Acad. Cienc.
Exactas Fis. Nat. Ser. A-Mat.}
\textbf{120} (2026), 41.

% \bibitem{Dou66}
% R. G. Douglas, On majorization, factorization,
% and range inclusion of operators on Hilbert
% space, {\em Proc. Amer. Math. Soc.} {\bf 17}
% (1966), 413--415.

\bibitem{Dug93} B. P. Duggal,  On $n$th roots of normal
contractions, {\em Bull. London Math. Soc.} {\bf 25} (1993), 74--80.

\bibitem{Emb68} M. R. Embry, $n$th
roots of operators, {\em Proc. Amer. Math. Soc.}
{\bf 19} (1968), 63--68.

   \bibitem{Embry73}
M. R. Embry, A generalization of the
Halmos-Bram criterion for subnormality,
{\em Acta Sci. Math. $($Szeged$)$} {\bf
35} (1973), 61--64.

\bibitem{Fur01} T. Furuta, {\em Invitation to
linear operators}, Taylor \& Francis, Ltd.,
London, 2001.

% \bibitem{FIY98}
% T. Furuta, M. Ito, T. Yamazaki, A
% subclass of paranormal operators
% including class of log-hyponormal and
% several related classes, {\em Sci. Math.}
% {\bf 1} (1998), 389--403.

%\bibitem{Gil74} F. Gilfeather, Operator valued roots of
%abelian analytic functions, {\em Pacific J.
%Math.} {\bf 55} (1974), 127--148.

\bibitem{hal50}
P. R. Halmos, Normal dilations and
extensions of operators, {\em Summa
Brasil. Math.} {\bf 2} (1950), 125--134.

%\bibitem{hal70}
%P. R. Halmos, Ten problems in Hilbert
%space, {\em Bull. Amer. Math. Soc.} {\bf
%76} (1970), 887--933.

\bibitem{hal82}
P. R. Halmos, {\em A Hilbert space
problem book}, Springer-Verlag, New York
Inc., 1982.

\bibitem{hal54}
P. R. Halmos, G. Lumer, Square roots of operators II, {\em
Proc. Amer. Math. Soc.} {\bf 5} (1954), 589--595.

\bibitem{hal53}
P. R. Halmos, G. Lumer, J. J. Schaffer,
Square roots of operators, {\em Proc.
Amer. Math. Soc.} {\bf 4} (1953),
142--149.

% \bibitem{He51}
% E. Heinz, Beitr\"age zur St\"orungstheorie der
% Spektralzerlegung, {\em Math. Ann.} {\bf 123}
% (1951), 415--438.

   \bibitem{Il-Ku19}
D. Ili\v{s}evi\'{c}, B. Kuzma, On square roots of
isometries, {\em Linear Multilinear Algebra} {\bf
67} (2019), 1898--1921.

% \bibitem{Ist67} V. Istr\u{a}\c{t}escu, On some hyponormal
% operators, {\em Pacific J. Math.} {\bf
% 22} (1967), 413--417.

% \bibitem{Ito99}
% M. Ito, Several properties on class A including
% $p$-hyponormal and log-hyponormal operators,
% {\em Math. Inequal. Appl.} {\bf 2} (1999),
% 569--578.

% \bibitem{Ito02}
% M. Ito, On classes of operators
% generalizing class A and paranormality,
% {\em Sci. Math. Jpn.} {\bf 7} (2002),
% 353--363.
   \bibitem{Ja02} Z. J. Jab{\l}o\'{n}ski, Complete hyperexpansivity,
subnormality and inverted boundedness conditions, {\em Integr. Equ.
Oper. Theory} {\bf 44} (2002), 316-336.
% \bibitem{jabl14}
% Z. J. Jab{\l}o\'{n}ski, I. B. Jung, J.
% Stochel, Unbounded quasinormal operators
% revisited, {\em Integr. Equ. Oper.
% Theory} {\bf 79} (2014), 135--149.

\bibitem{J-J-S2020}
Z. J. Jab{\l}o{\'n}ski, I. B. Jung, J. Stochel, $m$-isometric
operators and their local properties, {\em Linear Algebra Appl.}
\textbf{596} (2020), 49--70.

  \bibitem{J-J-S22} Z. J. Jab{\l}o{\'n}ski, I. B. Jung, J. Stochel, Conditional positive definiteness in operator theory,
 Dissertationes Math. 578, 2022, pp. 64.

% \bibitem{Jib10}
% A. A. S. Jibril, On operators for which
% $T^{*2} T^2 = (T^*T)^2$, {\em Int. Math.
% Forum} {\bf 46} (2010), 2255--2262.

% \bibitem{Kauf83} W. E. Kaufman, Closed operators
% and pure contractions in Hilbert space, {\em
% Proc. Amer. Math. Soc.} {\bf 87} (1983), 83--87.

\bibitem{Keo84} G. E. Keough, Roots of invertibly
weighted shifts with finite defect, {\em
Proc. Amer. Math. Soc.} {\bf 91} (1984),
399--404.

\bibitem{Ki-Ko22} Y. Kim, E. Ko, Characterizations
of square roots of unitary weighted composition
operators on $H^2$, {\em Complex Anal. Oper. Theory}
{\bf 16:14} (2022), 22 pp.

   \bibitem{Lam76}
A. Lambert, Subnormality and weighted
shifts, {\em J. London Math. Soc.} {\bf
14} (1976), 476--480.

% \bibitem{Lo34}
% K. L\"owner, \"Uber monotone Matrixfunktionen,
% {\em Math. Z.} {\bf 38} (1934), 177--216.

\bibitem{M-P-R23}
J. Mashreghi, M. Ptak, W. T. Ross, The square roots of some classical
operators, {\em Studia Math.} {\bf 269} (2023), 83--106.

% \bibitem{pp16}
% P. Pietrzycki, The single equality $A^{
% *n}A^n=(A^* A)^n$ does not imply the
% quasinormality of weighted shifts on
% rootless directed trees, {\em J. Math.
% Anal. Appl} {\bf 435} (2016), 338--348.

% \bibitem{pp18}
% P. Pietrzycki, Reduced commutativity of
% moduli of operators, {\em Linear Algebra
% Appl.} {\bf 557} (2018), 375--402.

\bibitem{P-S21}
P. Pietrzycki, J. Stochel, Subnormal $n$th roots
of quasinormal operators are quasinormal, {\em
J. Funct. Anal.} {\bf 280} (2021), 109001.

% \bibitem{P-S22} P. Pietrzycki, J. Stochel, Corrigendum to
% ``Subnormal $n$th roots of quasinormal operators
% are quasinormal'' [J. Funct. Anal. 280 (2021)
% 109001], {\em J. Funct. Anal.} {\bf 282} (2022),
% 109260.
\bibitem{P-S23}P. Pietrzycki, J. Stochel, On $n$th roots of bounded and unbounded quasinormal operators,
 Ann. Mat. Pura. Appl. 202 (2023), 1313-1333.

\bibitem{P-S-24} P. Pietrzycki, J. Stochel, Hyperrigidity I:
singly generated commutative $C^*$-algebras, to appear in
\emph{Israel J. Math.}

% \bibitem{P-S22-b}
% P. Pietrzycki, J. Stochel, Two-moment
% characterization of spectral measures on the
% real line, submitted.

\bibitem{put57}
C. R. Putnam, On square roots of normal
operators, {\em Proc. Amer. Math. Soc.}
{\bf 8} (1957), 768--769.

\bibitem{Ra-Ros} H. Radjavi, P. Rosenthal,
On roots of normal operators, {\em J. Math.
Anal. Appl} {\bf 34} (1971), 653--664.

\bibitem{Rich88} S. Richter, Invariant subspaces of
the Dirichlet shift, {\em Jour. Reine Angew. Math.} {\bf 386} (1988),
205-220.

%\bibitem{Rud73}
%W. Rudin, {\em Functional analysis},
%McGraw-Hill Series in Higher Math.,
%McGraw-Hill Book Co., New York, 1973.

\bibitem{Rud87} W. Rudin, {\em Real and Complex Analysis},
McGraw-Hill, New York 1987.

\bibitem{Sch12}
K. Schm\"{u}dgen, {\em Unbounded
self-adjoint operators on Hilbert space,}
Graduate Texts in Mathematics, 265,
Springer, Dordrecht, 2012.

   \bibitem{Sch17} K. Schm\"{u}dgen, {\em The moment
problem}, Graduate Texts in Mathematics
277, Springer, 2017.

\bibitem{Sh-At2000}
V. M. Sholapurkar, A. Athavale, Completely and alternatingly
hyperexpansive operators, {\em J. Oper. Theory} \textbf{43} (2000),
43--68.

\bibitem{sta62}
J. G. Stampfli, Hyponormal operators, {\em
Pacific J. Math.} {\bf 12} (1962), 1453--1458.

\bibitem{sta66}
J. G. Stampfli, Which weighted shifts
are subnormal?, {\em Pacific J. Math.}
{\bf 17} (1966), 367--379.

\bibitem{Stan23} H. Stankovi{\'c}, Subnormal $n$th
roots of matricially and spherically
quasinormal pairs, {\em Filomat} {\bf
37} (2023) 5325-5331.

\bibitem{Stankovic2025}
H. Stankovi\'{c}, Spherically
quasinormal tuples: $n$-th root problem
and hereditary properties, {\em Complex
Anal. Oper. Theory}, \textbf{19}
(2025), Article 156.

\bibitem{StankovicKubrusly2025}
H. Stankovi\'{c}, C. Kubrusly, On roots
of normal operators and extensions of
Ando's theorem, {\em Ann. Funct. Anal.}
\textbf{16} (2025), Article 60.

\bibitem{Sto92}
J. Stochel, Decomposition and
disintegration of positive definite
kernels on convex $*$-semigroups, {\em
Ann. Polon. Math.} {\bf 56} (1992),
243--294.

% \bibitem{Sto96}
%J. Stochel, Seminormality of operators
%from their tensor product, {\em Proc.
%Amer. Math. Soc.} {\bf 124} (1996),
%135--140.

% \bibitem{Sto02}
% J. Stochel, Lifting strong commutants of unbounded
% subnormal operators, {\em Integr. Equ. Oper. Theory}
% {\bf 43} (2002), 189--214.

% \bibitem{Sto-Szaf85} J. Stochel, F. H. Szafraniec,
% On normal extensions of unbounded operators. I,
% {\em J. Operator Theory} {\bf 14} (1985),
% 31--55.
   \bibitem{Sto-Szaf89} J. Stochel, F. H. Szafraniec,
On normal extensions of unbounded operators. II, {\em Acta Sci. Math.
$($Szeged$)$}, {\bf 53} (1989), 153--177.
%    \bibitem{Sto-Szaf89III}
% J. Stochel, F. H. Szafraniec, On normal
% extensions of unbounded operators. III. Spectral
% properties, {\em Publ. RIMS, Kyoto Univ.} {\bf
% 25} (1989), 105--139.
%    \bibitem{Sto-Szaf98}
% J. Stochel, F. H. Szafraniec, The complex moment
% problem and subnormality: a polar decomposition
% approach, {\em J. Funct. Anal.} {\bf 159}
% (1998), 432--491.
%   \bibitem{Sza00} F. H. Szafraniec, Subnormality in
%the quantum harmonic oscillator, {\em Comm.
%Math. Phys.} {\bf 210} (2000), 323--334.
%    \bibitem{Tana99} K. Tanahashi, On log-hyponormal
% operators, {\em Integr. Equ. Oper. Theory} {\bf
% 34} (1999), 364--372.
%    \Bibitem{uch93}
% M. Uchiyama, Operators which have
% commutative polar decompositions, {\em
% Oper. Theory Adv. Appl.} {\bf 62} (1993),
% 197--208.
%    \bibitem{uch01}
% M. Uchiyama, Inequalities for semibounded
% operators and their applications to
% log-hyponormal operators, {\em Oper. Theory Adv.
% Appl.} {\bf 127} (2001), 599--611.
   \bibitem{Weid80}
J. Weidmann, {\it Linear operators in
Hilbert spaces}, Springer-Verlag, Berlin,
Heidelberg, New York, 1980.
   \bibitem{wog85}
W. Wogen, Subnormal roots of subnormal
operators, {\em Integr. Equ. Oper.
Theory} {\bf 8} (1985), 432--436.
%    \bibitem{Yama99} T. Yamazaki,
% Extensions of the results on $p$-hyponormal and
% $\log$-hyponormal operators by Aluthge and Wang,
% {\em SUT J. Math.} {\bf 35} (1999), 139--148.

\end{thebibliography}
   
   \end{document}